\renewcommand{\baselinestretch}{1.3}
\newtheorem{assumption}{Assumption}
\def\qed{ \ \vrule width.2cm height.2cm depth0cm\smallskip}
\newenvironment{proof}{\noindent {\bf Proof.\/}}{$\qed$\vskip 0.1in}
\newcommand{\la}{\langle}
\newcommand{\ra}{\rangle}
\newcommand{\hP}{\hat\dbP}
\newcommand{\esssup}{\operatornamewithlimits{ess sup}}
\newcommand{\ol}{\overline}
\newcommand{\ul}{\underline}
\newcommand{\eps}{\varepsilon}
\newcommand{\LDP}{{{LDP }}}
\newcommand{\MDP}{{{MDP }}}
\newcommand{\LDPx}{{{LDP}}}
\newcommand{\Las}{{\La^*}}
\newcommand{\ba}{\begin{array}}
\newcommand{\ea}{\end{array}}
\newcommand{\be}{\begin{equation}}
\newcommand{\ee}{\end{equation}}
\newcommand{\bea}{\begin{eqnarray}}
\newcommand{\eea}{\end{eqnarray}}
\newcommand{\beaa}{\begin{eqnarray*}}
\newcommand{\eeaa}{\end{eqnarray*}}
\newcommand{\Remark}{\noindent{\bf Remark:}\  }
\newcommand{\Remarks}{\noindent{\bf Remarks:}\  }
\newcommand{\essinf}{\operatornamewithlimits{essinf}}
\def\neg{\negthinspace}
\def\dbB{\mathbb{B}}
\def\dbC{\mathbb{C}}
\def\dbD{\mathbb{D}}
\def\dbE{\mathbb{E}}
\def\dbF{\mathbb{F}}
\def\dbG{\mathbb{G}}
\def\dbH{\mathbb{H}}
\def\dbI{\mathbb{I}}
\def\dbJ{\mathbb{J}}
\def\dbK{\mathbb{K}}
\def\dbL{\mathbb{L}}
\def\dbM{\mathbb{M}}
\def\dbN{\mathbb{N}}
\def\dbP{\mathbb{P}}
\def\dbR{\mathbb{R}}
\def\dbS{\mathbb{S}}
\def\dbT{\mathbb{T}}
\def\dbQ{\mathbb{Q}}
\def\dbZ{\mathbb{Z}}
\def\Dom{{\rm dom}}
\def\sL{\mathscr{L}}
\def\a{\alpha}
\def\b{\beta}
\def\g{\gamma}
\def\d{\delta}
\def\e{\varepsilon}
\def\z{\zeta}
\def\k{\kappa}
\def\l{\lambda}
\def\m{\mu}
\def\n{\nu}
\def\si{\sigma}
\def\t{\tau}
\def\f{\varphi}
\def\th{\theta}
\def\o{\omega}
\def\h{\widehat}
\def\G{\Gamma}
\def\D{\Delta}
\def\Th{\Theta}
\def\L{\Lambda}
\def\Si{\Sigma}
\def\F{\Phi}
\def\O{\Omega}
\def\cA{{\cal A}}
\def\cB{{\cal B}}
\def\cC{{\cal C}}
\def\cD{{\cal D}}
\def\cE{{\cal E}}
\def\cF{{\cal F}}
\def\cG{{\cal G}}
\def\cH{{\cal H}}
\def\cI{{\cal I}}
\def\cJ{{\cal J}}
\def\cK{{\cal K}}
\def\cL{{\cal L}}
\def\cM{{\cal M}}
\def\cN{{\cal N}}
\def\cO{{\cal O}}
\def\cP{{\cal P}}
\def\cQ{{\cal Q}}
\def\cR{{\cal R}}
\def\cS{{\cal S}}
\def\cT{{\cal T}}
\def\cU{{\cal U}}
\def\cV{{\cal V}}
\def\cW{{\cal W}}
\def\cX{{\cal X}}
\def\cY{{\cal Y}}
\def\cZ{{\cal Z}}
\def\ch{\textsc{h}}
\def\no{\noindent}
\def\eq{\eqalign}
\def\ss{\smallskip}
\def\ms{\medskip}
\def\bs{\bigskip}
\def\q{\quad}
\def\qq{\qquad}
\def\hb{\hbox}
\def\pa{\partial}
\def\cd{\cdot}
\def\cds{\cdots}
\def\lan{\langle}
\def\ran{\rangle}
\def\td{\nabla}
\def\bD{{\bf D}}
\def\bF{{\bf F}}
\def\bG{{\bf G}}
\def\tr{\hbox{\rm tr}}
\def\qed{ \hfill \vrule width.25cm height.25cm depth0cm\smallskip}
\newcommand{\dfnn}{\stackrel{\triangle}{=}}
\newcommand{\basa}{\begin{assumption}}
\newcommand{\easa}{\end{assumption}}
\newcommand{\tbar}{\overline{t}}
\newcommand{\xbar}{\overline{x}}
\newcommand{\bas}{\begin{assum}}
\newcommand{\eas}{\end{assum}}
\newcommand{\lime}{\lim_{\epsilon \rightarrow 0}}
\newcommand{\zep}{z^\epsilon}
\newcommand{\bep}{b^\epsilon}
\newcommand{\hbep}{\hat{b}^\epsilon}
\def\limsup{\mathop{\overline{\rm lim}}}
\def\liminf{\mathop{\underline{\rm lim}}}
\def\ua{\mathop{\uparrow}}
\def\da{\mathop{\downarrow}}
\def\Ra{\mathop{\Rightarrow}}
\def\La{\mathop{\Leftarrow}}
\def\lan{\mathop{\langle}}
\def\ran{\mathop{\rangle}}
\def\embed{\mathop{\hookrightarrow}}
\def\esup{\mathop{\rm ess\;sup}}
\def\limw{\mathop{\buildrel w\over\rightharpoonup}}
\def\limws{\mathop{\buildrel *\over\rightharpoonup}}
\def\lims{\mathop{\buildrel s\over\rightarrow}} \def\lq{\leqno}
\def\rq{\eqno}
\def\pa{\partial}
\def\h{\widehat}
\def\wt{\widetilde}
\def\vr{\vrule width 1.7 pt height 6.8 pt depth 2.5pt} \def\cd{\cdot}
\def\cds{\cdots}
\def\ae{\hbox{\rm -a.e.{ }}}
\def\as{\hbox{\rm -a.s.{ }}}
\def\sgn{\hbox{\rm sgn$\,$}}
\def\meas{\hbox{\rm meas$\,$}}
\def\supp{\hbox{\rm supp$\,$}}
\def\span{\hbox{\rm span$\,$}}
\def\co{\mathop{{\rm co}}}
\def\coh{\mathop{\overline{\rm co}}}
\def\cl{\overline}
\def\codim{\hbox{\rm codim$\,$}}
\def\Int{\hbox{\rm Int$\,$}}
\def\diam{\hbox{\rm diam$\,$}}
\def\deq{\mathop{\buildrel\D\over=}}
\def\tr{\hbox{\rm tr$\,$}}
\def\deq{\mathop{\buildrel\D\over=}}
\def\Re{\hbox{\rm Re$\,$}}
\def\bnm{{\,|\neg\neg|\neg\neg|\neg\neg|\,}}
\def\dis{\displaystyle}
\def\wt{\widetilde}
\def\wh{\widehat}
\def\dh{\dot{h}}
\def\dF{\dot{F}}
\def\bF{{\bf F}}
\def\bx{{\bf x}}
\def\cad{c\`{a}dl\`{a}g}
\def\cag{c\`{a}gl\`{a}d~}
\def\bP{{\bf P}}
\def\1{{\bf 1}}
\def\by{{\bf y}}
\def\hSM{\widehat {\cS\!\cM}^2}
\def\:{\!:\!}
\def\reff#1{{\rm(\ref{#1})}}
\def \proof{{\noindent \bf Proof\quad}}
\def \dbf{{\mathbf{d}}}
\def \Usup{\overline{\cU}}
\def \Usub{\underline{\cU}}
\font\nrm=cmr10 at 9pt \font\nit=cmti10 at 9pt \font\nsl=cmsl10
\newtheorem{thm}{Theorem}[section]
\newtheorem{lem}[thm]{Lemma}
\newtheorem{prop}[thm]{Proposition}
\newtheorem{rem}[thm]{Remark}
\newtheorem{eg}[thm]{Example}
\newtheorem{defn}[thm]{Definition}
\newtheorem{assum}[thm]{Assumption}
\renewcommand {\theequation}{\arabic{section}.\arabic{equation}}
\def\thesection{\arabic{section}}
\begin{document}

\title{\bf Wellposedness of Second Order Master Equations for Mean Field Games with Nonsmooth Data
\thanks{An earlier version of this paper is entitled "Weak Solutions of Mean Field Game Master Equations", see arXiv:1903.09907v1.  The authors would like to thank all the feedbacks received on that version.} } 
\author{Chenchen Mou\thanks{\noindent  Department of Mathematics, 
City University of Hong Kong. E-mail: chencmou@cityu.edu.hk. This author is supported by CityU Start-up Grant 7200684.} ~ and ~ Jianfeng Zhang\thanks{\noindent  Department of Mathematics,
University of Southern California. E-mail:
jianfenz@usc.edu.  This author is supported in part by NSF grant DMS-1908665.}  
}
\date{}
\maketitle

\begin{abstract}
In this paper we study second order master equations arising from mean field games with common noise over arbitrary time duration. A classical solution typically requires the monotonicity condition (or small time duration) and sufficiently smooth data. While keeping the monotonicity condition, our goal is to relax the regularity of the data, which is an open problem in the literature. In particular, we do not require any differentiability in terms of the measures, which prevents us from obtaining classical solutions. We shall propose three weaker notions of solutions, named as {\it good solutions}, {\it weak solutions},  and {\it weak-viscosity solutions}, respectively, and establish the wellposedness of the master equation under all three notions. We emphasize that, due to the game nature, one cannot expect comparison principle even for classical solutions.  The key for the global (in time) wellposedness is the uniform a priori estimate for the Lipschitz continuity of the solution in the measures. The monotonicity condition is crucial for this uniform estimate and thus is crucial for the existence of the global solution, but is not needed for the uniqueness in such Lipschitz class. To facilitate our analysis,  we construct a smooth mollifier for functions on Wasserstein space, which is new in the literature and is interesting in its own right. 

Following the same approach of our wellposedness results, we prove the convergence of the Nash system, a high dimensional system of PDEs arising from the corresponding $N$-player game, under mild regularity requirements. We shall also prove  a propagation of chaos property for the associated optimal trajectories.
\end{abstract}

\no{\bf Keywords.}  Mean field game, $N$-player game, master equation, Nash system, % McKean-Vlasov SDEs, 
forward-backward SDEs, good solutions, weak solutions, weak-viscosity solutions, Wasserstein spaces 

\ms
\no{\it 2020 AMS Mathematics subject classification:}  49N80, 35Q89, 91A16, 60H30  %35R15,  60H30, 93E20

%\vfill\eject

\tableofcontents

\vfill\eject
\section{Introduction}
\label{sect-Introduction}
\setcounter{equation}{0} 

\subsection{Literature review}
Initiated independently by Caines-Huang-Malhame \cite{CHM} and Lasry-Lions \cite{LL}, mean field games have received very strong attention in the past decade. We refer to Lions \cite{Lions} and Cardaliaguet \cite{Cardaliaguet} for  introduction of the subject in early stage and Camona-Delarue \cite{CD1, CD2} for more recent developments. Such problems consider limit behavior of large systems where the agents interact with each other in some symmetric way, with the systemic risk as a notable application. The master equation, introduced by Lions \cite{Lions}, is a powerful tool in this framework, which plays the role of the PDE in the standard literature of controls/games:
\bea
\label{masterint}
&\dis \cL V(t,x,\mu) :=\pa_t V + \frac{\b_1^2+\b^2}{2}\tr(\pa_{xx} V) + H(x,\partial_x V) + F(x,\mu) + \cM V =0,\nonumber\\
&\dis V(T,x,\mu) = G(x,\mu),\q \mbox{where}\q\\
&\dis   \cM V(t,x,\mu)  :=\tr\Big( \tilde  \dbE\Big[\frac{\b_1^2+\b^2}{2} \pa_{\tilde x} \pa_\mu V(t,x, \mu, \tilde \xi) + \pa_\mu V(t, x, \mu, \tilde \xi)(\pa_pH)^\top(\tilde \xi, \pa_x V(t, \tilde \xi, \mu))\nonumber\\
&\dis  +\b^2\pa_x\pa_\mu V(t,x,\mu,\tilde \xi)+\frac{\b^2}{2}\bar{\mathbb E}\big[\pa_{\mu\mu}V(t,x,\mu,\bar\xi,\tilde\xi)\big]\Big]\Big).\nonumber
\eea
Here $\b_1, \b$ are two constants, $\partial_t,\partial_x,\partial_{xx}$ are standard temporal and spatial derivatives, $\partial_{\mu},\partial_{\mu\mu}$ are Wasserstein derivatives with respect to the measure $\mu$, $\tilde \xi$ and $\bar\xi$ are independent random variables with the same law $\mu$, and $\tilde {\mathbb E}$ and $\bar{\mathbb E}$ are (conditional) expectations corresponding to $\tilde \xi$ and $\bar\xi$ respectively. The main feature of the master equation is that its state variables include a probability measure $\mu$, typically the distribution of certain underlying state process, so it can be viewed as a PDE on the Wasserstein space. By nature this is an infinite dimensional problem. After \cite{Lions}, new understandings of the master equation have been discovered, see, e.g.,  Bensoussan-Frehse-Yam \cite{BFY2,BFY3}  and Carmona-Delarue \cite{CD} where master equations are derived formally following different approaches.

There have been serious efforts on classical solutions of master equations in the past years, especially for global (in time) solutions. Buckdahn-Li-Peng-Rainer \cite{BLPR} established the global wellposedness for a linear master equation ($H$ is linear in $\pa_x V$)  in the case $\b=0$ by means of probabilistic techniques.
 Chassagneux-Crisan-Delarue \cite{CCD} used FBSDEs of the McKean-Vlasov type to study the global wellposedness for master equations with a general Hamiltonian, also without common noise ($\beta=0$). The groundbreaking paper Cardaliaguet-Delarue-Lasry-Lions \cite{CDLL}, by using a PDE approach, obtained global wellposedness of the master equation with general Hamiltonian and common noise. Moreover, \cite{CDLL} used the classical solution of the master equation to justify the mean field limit, i.e. the convergence of the Nash systems for the $N$-player games to  the master equation for the mean field game, as well as  the propagation of chaos for the $N$-player games with closed loop equilibria. Cardaliaguet-Cirant-Porretta \cite{CCP} developed a splitting method to prove local wellposedness of the master equation with more general Halmiltonian in the form $H(x, \mu,\pa_x V)$ for standard mean field games and mean field games with a major player. Moreover, there are several works in the realm of potential mean field games. Gangbo-Swiech \cite{GS2} showed the first order master equation ($\b_1=\beta=0$), derived from a deterministic linear quadratic mean field control problem, admits a local (in time) classical solution.  This was recently extended to global wellposedness for general Hamiltonian by Gangbo-Meszaros \cite{GM}. Bensoussan-Yam \cite{BY} studied the same type of problem, but by using the ``lifting" idea introduced in \cite{Lions}; and together with Graber,  they recently extended the result to the case that involves individual noises ($\b_1 >0$, but $\b=0$) in \cite{BGY1,BGY2}, with both local and global wellposedness results.  We emphasize that all the above global wellposedness results are under certain monotonicity assumption on $F$ and $G$, with the exception \cite{BLPR} which is linear and thus does not involve controls. In particular, \cite{CCD, CDLL} used the Lasry-Lions monotonicity condition, while \cite{BGY1,BGY2,GM} used the displacement convexity condition which implies the so-called displacement monotonicity. The Lasry-Lions monotonicity condition is also assumed in Bayraktar-Cohen \cite{BC} and Bertucci-Lasry-Lions \cite{BLL}, which studied classical solutions for finite state mean field game master equations. We also observe that the weak monotonicity in Ahuja \cite{Ahuja} is exactly the displacement monotonicity.

Because of its infinite dimensionality nature, besides the monotonicity condition (in Lasry-Lions' sense or in displacement sense), all the above global wellposedness results require very strong regularity assumptions on data. Relaxing these assumptions to study wellposedness remains largely open. There are two directions to relax the assumptions: one is to remove the  monotonicity condition and the other is to weaken the regularity assumptions on data.  The goal of this paper is in the second direction. To our best knowledge, this paper is the first work which establishes the global wellposedness of the master equation without requiring smooth data. Before discussing our paper, let's review several important progresses made on mean field games without the monotonicity condition. It will be very interesting to combine the ideas of our paper and these works and we shall leave that for future research.

The monotonicity condition is to guarantee the uniqueness of the mean field equilibrium, and then the game value at this unique equilibrium is the (candidate) solution to the master equation. A mean field game is associated with a mean field game system, a forward backward system in (stochastic) PDE form or equivalently in McKean-Vlasov SDE form. Unfortunately, this forward backward system is typically degenerate, and together with other technical conditions, the monotonicity condition ensures this degenerate system has a unique solution. It is well understood in the PDE literature that the corresponding non-degenerate system would have a unique solution, without requiring certain monotonicity condition. This is true in the mean field case as well.  Foguen Tchuendom \cite{F} studied a special one dimensional linear quadratic mean field game with common noise, where the data depend on the law of the state process only through its mean. Since the mean is one dimensional, the common noise exactly makes the problem non-degenerate and the mean field equilibrium unique. In this special case, the variable of measure is reduced to the one dimensional variable for the mean and the master equation is reduced to a standard PDE, see Delarue-Foguen Tchuendom \cite{DF}. \cite{F} also showed that, when there is no common noise and thus the system is degenerate, the game can indeed have multiple mean field equilibria. For the general case, since the measure is essentially infinitely dimensional, Delarue  \cite{Delarue1} introduced an infinite dimensional common noise to make the reformulated problem  non-degenerate and thus restored the uniqueness of mean field equilibria. In this case the master equation becomes an infinitely dimensional system of infinitely dimensional PDE, and its mild solution is studied in \cite{Delarue1}. Moreover, recently Bayraktar-Cecchin-Cohen-Delarue \cite{BCCD}  applied this approach to a finite state mean field game.

When neither the monotonicity condition nor the non-degeneracy is satisfied, the mean field game could have multiple equilibria, as shown in \cite{F}. In this case, one approach is to fix a special type of equilibria and then study its existence and properties.  The works Delarue-Foguen Tchuendom \cite{DF}, Cecchin-Dai Pra-Fisher-Pelino \cite{CDFP} and Cecchin-Delarue \cite{CecchinDelarue} are in this direction. A larger literature is on the possible convergence of the equilibria for the $N$-player game, which is quite often unique because the corresponding Nash system is non-degenerate due to the presence of the individual noises, to the mean field equilibria (which may or may not be unique), see, e.g., Cardaliaguet-Delarue-Lasry-Lions \cite{CDLL}, Carmona-Delarue \cite{CD1,CD2}, Delarue-Lacker-Ramanan \cite{DLR1,DLR2}, Lacker \cite{Lacker0,Lacker1,Lacker2,Lacker3}, Nutz-San Martin-Tan \cite{NST}, to mention a few. Finally, we note that the ongoing work Iseri-Zhang \cite{IZ} takes a quite different approach by investigating the set of game values over all mean field equilibria and establishes two main properties of the set value for mean field games: (i)  the dynamic programming principle; and (ii) the convergence of the $N$-player game set value to the mean field game set value.

\subsection{The main results and contributions of this paper}
As mentioned, the main goal of this paper is to establish global wellposedness for the master equation \reff{masterint}  with non-smooth data, while keeping the Lasry-Lions monotonicity condition. In particular, we will not require any differentiability in $\mu$, but only certain Lipschitz continuity. We emphasize that, due to the infinite dimensionality of the Wasserstein space of measures, the Lipschitz continuity is much weaker than the continuous differentiability, and thus is much more likely to hold in applications. Consequently, under such mild regularity conditions, one cannot expect classical solutions to the master equation, see  Example \ref{eg-nonclassical} below. We shall propose three weaker notions of solutions, all of them are required only to be Lipschitz continuous in $\mu$, and establish their global wellposedness. To our best knowledge, this is the first (global) wellposedness result  in the literature for master equations with non-smooth data. Moreover, other than slight different requirements on the regularity in $x$, our three notions are all equivalent. We shall remark that the master equation \reff{masterint} is non-local (in space), because the term $\pa_x V(t, \tilde \xi, \mu)$ in $\cM V$ involves the values $\pa_x V(t, \tilde x, \mu)$ for all $\tilde x$ in the support of $\mu$. As a consequence, while we have global wellposedness (existence, uniqueness, and stability), even classical solutions to the master equation typically do not satisfy the comparison principle, see Example \ref{eg-comparison} below for a counterexample, consistent with the fact that comparison principle typically fails for the values of non-zero sum games (c.f. Feinstein-Rudloff-Zhang \cite{FRZ}). So the viscosity solution approaches in Gangbo-Swiech \cite{GS1}, Gangbo-Tudorascu \cite{GT}, Pham-Wei \cite{PW},  and Wu-Zhang \cite{WZ}  for HJB equations on Wasserstein space (and slightly more general parabolic master equations in \cite{WZ}), where the comparison principle is a main task, do not work here. We believe this is the main reason that a good notion of weak solutions for master equations was open in the literature.   

Our approach for the global wellposedness of master equations relies heavily on the a priori estimate for the uniform Lipschitz continuity in $\mu$ of the solution $V$. Note that $V$ serves as the decoupling field for the closely related  forward backward mean field game system.  In the literature of standard FBSDEs, it has been well understood that the global wellposedness of the FBSDE is essentially equivalent to the uniform Lipschitz continuity of the decoupling field, see Delarue \cite{Delarue}, Zhang \cite{Zhang-FBSDE}, Ma-Yin-Zhang \cite{MYZ}, and Ma-Wu-Zhang-Zhang \cite{MWZZ}. Indeed, this Lipschitz continuity allows us to extend a local solution, which is much easier to obtain, to a global one. This strategy remains effective for master equations, see e.g. \cite{CCD, CD2} in the realm of classical solutions. We shall establish this uniform estimate, as well as the stability result, under conditions much weaker than those in the literature. While following the same spirit as in  \cite{Cardaliaguet,CDLL} which use PDE arguments, we shall use probabilistic arguments by utilizing the forward backward McKean-Vlasov SDEs as in \cite{CCD}{\footnote {We note though this FBSDE is different from the one in \cite{CD2} derived from the stochastic maximum principle.}}. Unlike the existing works, our arguments do not require the differentiability of the data or $V$ in $\mu$, which is particularly convenient for our purpose. We note that the monotonicity condition is crucial for deriving the uniform estimate here, which in turn implies the existence of global solutions (under our new notions) to the master equation. However, we emphasize that the monotonicity condition is not needed for the uniqueness in the class of Lipschitz continuous solutions. In other words, any alternative conditions such as the displacement monotonicity in \cite{GM} which could lead to this uniform Lipschitz continuity will also ensure the global wellposedness of the master equation. We shall explore this further in our future research. 

Our conditions are not sufficient even for local classical solutions. To facilitate our analysis, we shall introduce a smooth mollifier for continuous functions on Wasserstein space. Note that the Wasserstein space is infinitely dimensional, this mollification is by no means easy. A work in this direction is  Lasry-Lions \cite{LL0},  which used explicit inf-sup-convolution to approximate uniformly continuous functions on Hilbert spaces with $C^{1,1}$ functions (that is, the gradient is Lipschitz continuous). This result was extended further by Cepedello Boiso \cite{Cepedello,Cepedello1} to any superreflexive Banach space. Note that the Wasserstein space of measures can be lifted to the Hilbert space of square integrable random variables as in \cite{Lions}, so one can apply this regularization to our data $F$ and $G$. However, even in finitely dimensional case, the inf-sup-convolution does not ensure regularity beyond $C^{1,1}$. This is not sufficient for our purpose, for example when we need a local classical solution for the master equation with mollified data $(F_n, G_n, H_n)$.  Therefore, we have to come up with a new smooth mollifier. Our idea is to first discretize the underlying measure and then to mollify the coefficients of the involved Dirac measures. Our mollifier is infinitely differentiable and approximates the original function uniformly. More importantly, for Lipschitz continuous functions under the $1$-Wasserstein distance $\mathcal{W}_1$, the mollified functions are uniformly Lipschitz continuous under $\cW_1$ with a common Lipschitz constant. This property is crucial for the uniform estimate of the Lipschitz continuity of $V$ in $\mu$ mentioned in the previous paragraph. We shall remark though that the above  property fails if we replace the metric $\cW_1$ with the $2$-Wasserstein distance $\mathcal{W}_2$.  Nevertheless, although slightly less natural than $\cW_2$, the uniform regularity under $\cW_1$ serves our purpose well. We would also like to point out that our mollifier does not inherit the monotonicity condition. In fact, we doubt any reasonable mollifier could inherit that.  

We now explain in more details the three notions of solutions we propose, which we call {\it good solution}, {\it weak solution}, and {\it weak-viscosity solution}, respectively. As mentioned, they are all required only to be Lipschitz continuous in $\mu$. Moreover,  a good solution is required to be Lipschitz continuous in $x$, a weak solution is continuously differentiable in $x$, and a weak-viscosity solution is such that $\pa_x V$ is also uniformly Lipschitz continuous in $(x, \mu)$. When they have the desired regularity in $x$, all three notions are equivalent. More importantly, under our mild technical conditions and the monotonicity condition, the master equation \reff{masterint} has a unique global solution in all three senses, and the stability result also holds. We remark that the monotonicity condition is not needed for the local wellposedness. 

The notion of {\it good solution} is based on the stability argument, and we borrow the name from Jensen-Kocan-Swiech \cite{JKS} which studies fully nonlinear elliptic PDEs.  Roughly speaking, we first mollify the data to obtain smooth $(F_n, G_n, H_n)$, then consider the classical solution $V_n$ for the master equation with smooth data $(F_n, G_n, H_n)$, and finally define the good solution as the (unique) limit of $V_n$ which converges due to the stability result.  However, since the mollified data do not inherit the monotonicity condition, we are not able to obtain a global classical solution $V_n$, but only a local one which does not require the monotonicity condition. So our good solution is also first defined locally and then extended to a global one, thanks to the uniform Lipschitz continuity we will achieve.

The notion of {\it weak solution} is in the spirit of the integration by parts formula, applied to the mean field game system (MFG system):
\bea
\label{FSPDEint}
&\left.\ba{c}
\dis d\rho(t,x) = \big[\frac{\beta_1^2+\b^2}{2}\tr\big( \pa_{xx} \rho(t,x)\big) - div(\rho(t,x) \pa_p H(x,\pa_x u(t,x)))\big]dt \\
\dis- \b\partial_x\rho(t,x)\cd d B_t^0,\qq \rho(0,x) = \rho_0(x);
\ea\right.\\
\label{BSPDEint}
&\left.\ba{c}
\dis d u(t, x)=  - \big[\tr\big(\frac{\b_1^2+\beta^2}{2} \pa_{xx} u(t,x) +\b\partial_x v(t,x)\big) +H(x,\pa_x u(t,x)) + F(x, \rho_t)\big]dt\\
\dis + v(t,x)\cd dB_t^0,\qq u(T,x) = G(x, \rho_T).
\ea\right.
\eea
Here the Brownian motion $B^0$ is the common noise, \reff{FSPDEint} is the stochastic Fokker-Planck equation with solution $\rho$ in the sense of distribution, and \reff{BSPDEint} is the stochastic HJB equation with $\dbF^{B^0}$-progressively measurable solution pair $(u, v)$. Given the (candidate) solution $V$ to the master equation \reff{masterint}, we may decouple the MFG system by replacing \reff{FSPDEint} with:
\bea
\label{FSPDEint2}
\left.\ba{c}
\dis d\rho(t,x) = \big[\frac{\beta_1^2+\b^2}{2}\tr\big( \pa_{xx} \rho(t,x)\big) - div(\rho(t,x) \pa_p H(x,\pa_x V(t,x, \rho_t)))\big]dt \\
\dis- \b\partial_x\rho(t,x)\cd d B_t^0,\qq \rho(0,x) = \rho_0(x).
\ea\right.
\eea
We see particularly the involvement of $\pa_x V$ in \reff{FSPDEint} and thus we shall require its existence for weak solutions. We may define weak solutions to forward SPDEs and backward SPDEs  by standard integration by parts formula, in particular, we refer to Ma-Yin-Zhang \cite{MYZ} and Qiu \cite{Qiu} for weak solutions to BSPDEs. Then we call $V$ a weak solution to the master equation \reff{masterint} if, for any weak solution $\rho$ to \reff{FSPDEint2}, $u(t, x):= V(t,x, \rho_t)$ is a weak solution to \reff{BSPDEint}. We shall point out that this notion is different from the weak solution for the MFG system \reff{FSPDEint}-\reff{BSPDEint}  in  Porretta \cite{Porretta} and Cardaliaguet-Graber-Porretta-Tonon \cite{CGPT}. These two works consider the local coupling case: $F = F(x, \rho(t,x)), G= G(x, \rho(T,x))$, without the common noise ($\b=0$), and thus the MFG system becomes a forward backward system of standard PDEs. A weak solution to the MFG system is a pair $(\rho, u)$ such that $\rho$ is a weak solution to PDE \reff{FSPDEint} (again with  $\b=0$) for given $u$, and  $u$  is a weak solution to PDE \reff{BSPDEint} for given $\rho$. However, this has fundamental difference with our notion of weak solution to the master equation. Besides the obvious difference on the coupling of $\rho$ in $F, G$ and many other technical differences, we note that in \reff{FSPDEint2}  the term $\pa_x V$ depends on the solution $\rho$, while in \reff{FSPDEint} the term $\pa_x u$ is fixed. So the uniqueness of $V$ has different nature from the uniqueness of $(\rho, u)$ in \reff{FSPDEint}-\reff{BSPDEint}.

The notion of {\it weak-viscosity solution} again considers the decoupled MFG system \reff{FSPDEint2}-\reff{BSPDEint}. We say $V$ is a weak-viscosity solution to the master equation \reff{masterint} if, for any weak solution $\rho$ to \reff{FSPDEint2}, $u(t, x):= V(t,x, \rho_t)$ is a viscosity solution to BSPDE \reff{BSPDEint}. We note that, when there is no common noise ($\b=0$) but with $\b_1>0$, \reff{BSPDEint} becomes a standard non-degenerate parabolic PDE, which has a unique classical solution under very mild conditions. The classical solution for the BSPDE \eqref{BSPDEint}, with $\beta>0$, is much harder to obtain. Besides the weak solution approach for BSPDE  \reff{BSPDEint}, we may also treat it in a pathwise manner by viewing it as a path dependent PDE (PPDE). We shall adopt the viscosity solution approach for PPDEs developed by Ekren-Keller-Touzi-Zhang \cite{EKTZ}, Ekren-Touzi-Zhang \cite{ETZ1, ETZ2}, and Ren-Touzi-Zhang \cite{RTZ}. This approach requires certain pathwise regularity  in terms of  the paths $\o$ of $B^0$. For this purpose we need to require the Lipschitz continuity of $\pa_x V$ so that the solution $\rho$ to the FSPDE \reff{FSPDEint2} would have the desired regularity in $\o$. We emphasize again that, while for fixed $\rho$ the viscosity solution $u$ to the BSPDE \reff{BSPDEint} would have the desired comparison principle, the weak-viscosity solution $V$ to the master equation \reff{masterint} typically does not satisfy the comparison principle. We remark that one can also consider pathwise viscosity solution to the FSPDE \reff{FSPDEint2}, initiated by Lions-Sounganidis \cite{LS1, LS2} and see Buckdahn-Keller-Ma-Zhang \cite{BKMZ} and the references therein. However, unlike that  $u$  stands for the utility of the individual player, the $\rho$ for the FSPDE stands for the environment or say the collective states of (infinitely many) other players, thus it is more appropriate to take the global (in space) approach by considering the weak solution for $\rho$.  We would like to mention that \cite[Section 4.4.3]{CD2} also proposed a notion of viscosity solution for the master equation \eqref{masterint} following the standard approach of Crandall-Ishii-Lions \cite{CIL}.  However, due to the nonlocal feature of \reff{masterint}, the uniqueness of their viscosity solution is not clear (not to mention the comparison principle which we know is not true in general). Moreover, a very recent paper Bertucci \cite{Bertucci} proposed a notion of monotone solution for finite state space master equations, which is in the spirit of viscosity solutions, and established wellposedness under certain monotonicity condition. 

As an important application of our theory, we prove the convergence of the Nash system and the propagation of chaos for the $N$-player game, thus extend the corresponding result in \cite{CDLL, CD2} to our setting. The approach in \cite{CDLL, CD2},  even for the case without the common noise,  relies heavily on the boundedness of the second order derivatives, especially $\pa_{\mu\mu} V$, which is exactly what we want to avoid. We shall follow our approach for the global wellposedness of the master equation, namely we first establish the local (in time) convergence, and then extend it to the whole time interval. To our best knowledge, this approach is new for such a convergence in the literature. Without surprise, the uniform Lipschitz continuity plays a key role for this extension. We remark that another crucial property for our approach to work is the flow property of the system. In the literature, people typically consider the $N$-player game starting with i.i.d. random variables. This independence will be destroyed immediately when time evolves due to the interaction among the particles, and thus one cannot apply a local convergence result for i.i.d. initials to the system on a later interval. We shall instead study the $N$-player game starting with deterministic initials, which can be viewed as a conditional version of the standard system with i.i.d. initials. The convergence of the latter system will be obtained easily after we establish the convergence of the former system. However, we should point out that, due to some technical reasons, in this section we assume the Hamiltonian $H$ is uniform Lipschitz continuous. This unfortunately excludes the case that $H$ is quadratic in $\pa_x V$, which is studied in \cite{CD2} by using the classical solution approach (\cite{CDLL} also assumes the uniform Lipschitz continuity of $H$). We shall explore the convergence of this interesting case in our future study.

Finally, as an independent result, we provide a pointwise representation formula for the Wasserstein derivatives $\pa_\mu V(t,x,\mu, \tilde x)$ and $\pa_{\mu\mu}V(t,x,\mu, \tilde x, \bar x)$ through strong solutions of certain McKean-Vlasov FBSDEs, provided these FBSDEs are wellposed. We believe our formulas are new and are interesting in their own rights. In particular, our arguments provide an alternative approach for the existence of classical solutions for the master equation and, although not carried out in details in this paper, our arguments allow us to see the "minimum" technical conditions we will need to ensure the existence and continuity of these derivatives and hence to ensure the existence of classical solutions.  We note that \cite[Corollary 3.9]{CDLL}  also provided a pointwise representation formula for the gradient ${\d V\over \d \mu}(t,x,\mu, \tilde x)$.  Since  $\pa_\mu V (t,x,\mu, \tilde x) = \pa_{\tilde x} {\d V\over \d \mu}(t,x,\mu, \tilde x)$, so \cite{CDLL} implies a representation formula for $\pa_\mu V (t,x,\mu, \tilde x)$ as well, by involving a FBSPDE system whose initial value is the derivative of the Dirac measure. However, the connection between these two formulas is not clear to us. 

\ms
\no{\bf Connection with the earlier version of the paper: arXiv:1903.09907v1} (referred to as "the early version"). The early version has been circulated in the community for about one and a half years. It deals with a much simpler setting and we would like to refer readers who are only interested in the main ideas of our approach to the early version. We have made significant expansion in this version (the length of the paper is more than doubled). For the convenience of the readers who read the early version before, we list here a few main changes we made in this version.

\begin{itemize}
\item{} We extend the state space from $\dbT$ to $\dbR^d$, consider a general Hamiltonian $H$, and add the common noise.

\item{} For the crucial Lipschitz continuity estimates, we change from PDE arguments to probabilistic arguments, which seem more convenient to us.

\item{} The good solution and weak solution were called vanishing weak solution and Sobolev solution, respectively, in the early version, and we have improved their definition.  In particular, we do not need to require the differentiability in $\mu$ for the weak solution. The weak-viscosity solution is new in this version.

\item{} We add the whole section on the convergence of the Nash system.

\item{} We simplify the representation formula for $\pa_\mu V$ and add the representation formula for $\pa_{\mu\mu} V$.

\item{} We add a few examples in Appendix to illustrate some subtle points.

\end{itemize}

The rest of the paper is organized as follows. In Section \ref{sect-game} we introduce the mean field game and N-player game and their associated master equation and Nash system, in an heuristic way, and exhibit all the main results in the paper. In Section \ref{sect-mollifier} we construct a smooth mollifier for functions of probability measures. Section \ref{sect-regularity} is devoted to the uniform regularity of the value function and the stability result. In Sections \ref{sect-good}, \ref{sect-weak} and \ref{sect-viscosity} we propose good, weak and weak-viscosity solutions for our master equation and establish their wellposedness and equivalency. In Section \ref{sect-convergence} we establish wellposedness of classical solutions for our Nash system, and show various convergence results from the N-player game to the mean field game. In Section \ref{sect-representation} we provide pointwise probabilistic representation formulas for $\pa_\mu V, \pa_{\mu\mu} V$.  Finally, in Section \ref{sect-proof} we finish some technical proofs which were postponed in the previous sections.

\subsection{Some notations used in the paper}

For any $p\geq 1$ and $M,R\geq 0$, we introduce some notations used throughout the paper:
\begin{itemize}
\item $\Th:= [0, T]\times \dbR^d \times \cP_2$;
\item $D_R:=\Big\{(x,z)\in \mathbb R^{d\times 2}\,\,:\,\,|z|\leq R\Big\}$;
\item $Q_M:= \{x\in \dbR^d: |x_l|\le M, l=1,\cds,d\}$.
\item $\mathcal{P}_p:=\Big\{\mu\in\mathcal{P}:~\|\mu\|_p:=\Big( \int_{\mathbb R^d}|x|^p\mu(dx)\Big)^{\frac{1}{p}}< \infty\Big\}$ and $\mathcal{P}_p^M:=\Big\{\mu\in\mathcal{P}:~\|\mu\|_p\leq M\Big\}$;
\item $C^0(\mathcal{P}_p):=\Big\{U:\mathcal{P}_p\to \mathbb R\,\,:\,\,U\text{ is continuous in $\mathcal{P}_p$ (under $\mathcal{W}_p$)}\Big\}$;
\item $C^1(\mathcal{P}_2):=\Big\{U\in C^0(\mathcal{P}_2)\,\,:\,\,\pa_\mu U\text{ exists and is continuous on $\cP_2\times \dbR^d$}\Big\}$;
\item $C^{1,2,2}(\Th):=\Big\{U\in C^0( \Th; \dbR)\,\,:\,\,\pa_t U,\,\, \pa_x U,\,\, \pa_{xx} U,\,\, \pa_\mu U(t,x,\mu, \tilde x),\,\, \pa_{x}  \pa_\mu U(t,x,\mu, \tilde x)$, $\pa_{\tilde x}  \pa_\mu U(t,x,\mu, \tilde x)\,\, \text{and}\,\, \pa_{\mu\mu} U(t,x,\mu, \bar x, \tilde x)\text{ exist and are continuous}\Big\}$;
\item  $C^0_{Lip}(\dbR^d \times \cP_2):=\Big\{U:\mathbb R^d\times\mathcal{P}_2\to\mathbb R\,\,:\,\,U\text{ is uniform Lipschitz continuous}, \text{under $\mathcal{W}_1$}$ $\text{for $\mu$}\Big\}$;
\item $C^0_{Lip}(\Th):=\Big\{U\in C^0(\Th)\,\,:\,\,U\text{ is uniformly Lipschitz continuous in $(x,\mu)$, under $\mathcal{W}_1$}$ $\text{for $\mu$, uniformly in $t\in[0,T]$}\Big\}$;
\item $C^{0,1-}(\Th):=\Big\{U\in C^0_{Lip}(\Th):\pa_x U\text{ exists and is continuous in $(x,\mu)$ for all $t\in$}$ $[0,T]\Big\}$; 
\item $C^{0,2-}(\Th):=\Big\{U \in C^{0,1-}(\Th)\,\,:\,\,\pa_x U\in C^{0}_{Lip}(\Th) \Big\}$;
\item $\dbL^p(\cG):=\!\!\Big\{\xi:\Omega\to \mathbb R^d: \xi \text{ is $\cG$-measurable and $\mathbb E\big[|\xi|^p\big]<\infty$}\Big\}$ for any $\sigma$-algebra $\cG$ of $\Omega$;
\item $\dbL^p(\cG,\mu):=\Big\{\xi\in\dbL^p(\cG)\,:\, \mathcal{L}_{\xi}=\mu\Big\}$ for any $\mu\in\mathcal{P}$;
\item  $\wh L(x, z) := L(x, \pa_p H(x,z)) = z\cd \pa_p H(x,z) - H(x,z)$.
\end{itemize}

\section{Preliminaries and the main results}
\label{sect-game}
\setcounter{equation}{0}

We start with the basic setting in Wasserstein space. Let $[0, T]$ be a finite time horizon, and $\cP$  the set of all probability measures on $\mathbb R^d$. In particular,  $\d_x\in \cP$ denotes the Dirac-measure at  $x\in \mathbb R^d$. For any $p\geq 1$, $M\geq 0$, and any measure $\mu\in \cP$, denote
\bea
\label{cP}
\|\mu\|^p_p:= \int_{\mathbb R^d} |x|^p \mu(dx),~ \mathcal{P}_p:=\{\mu\in\mathcal{P}:~\|\mu\|_p < \infty\},~
\mathcal{P}_p^M:=\{\mu\in\mathcal{P}:~\|\mu\|_p  \le M\}.
\eea
Introduce the $p$-Wasserstein distance on $\mathcal{P}_p$: for any $\mu,\nu\in \cP_p$,
\bea
\label{Wp}
\cW_p(\mu, \nu) := \inf\Big\{\big(\dbE[|\xi-\eta|^p]\big)^{1\over p}:~\mbox{for all r.v. $\xi$, $\eta$ such that $\cL_\xi = \mu$, $\cL_\eta = \nu$}\Big\}.
\eea
At above $\xi, \eta$ are $\mathbb R^d$-valued random variables on arbitrary probability space and $\cL_\cd$ is the law of the random variable.  In particular, when $p=1$ we have the dual representation:
\bea
\label{W1}
\cW_1(\mu,\nu)=\sup \Big\{\int_{\mathbb R^d}\f(x)[\mu(dx)-\nu(dx)]: \mbox{$\f\in C^1(\mathbb R^d; \dbR)$ s.t. $\f(0)=0$, $|\pa_x\f|\le 1$}\Big\}.
\eea

Consider a function $U: \cP_2 \to \dbR$. By \cite{Lions, Cardaliaguet, WZ0}, the derivative of $U$ takes the form $\pa_\mu U:  \cP_2\times \mathbb R^d \to \dbR^d$ satisfying: for all $\dbR^d$-valued square integrable random variables $\xi, \eta$, 
\bea
\label{pamu}
U(\cL_{\xi +  \eta}) - U(\mu) = \dbE\big[\pa_\mu U(\mu, \xi)\cd \eta \big] + o(\|\eta\|_2),\q \mbox{where}\q \mu:= \cL_\xi.
\eea
Let $C^0(\cP_2)$ denote the set of continuous functions $U: \cP_2 \to \dbR$, and $C^1(\cP_2)$ the subset of $U\in C^0(\cP_2)$ such that  $\pa_\mu U$ exists and is continuous on $\cP_2\times \dbR^d$. Given $U\in C^1(\cP_2)$, we may define  $\pa_x\pa_\mu U$, $\pa_{\mu\mu}  U$, and higher order derivatives in the same manner. Moreover, denote
\bea
\label{Th}
\Th:= [0, T]\times \dbR^d \times \cP_2.
\eea
Let $C^{1,2,2}(\Th)$ denote the set of  $U\in C^0( \Th; \dbR)$ such that $\pa_t U$, $\pa_x U$, $\pa_{xx} U$, $\pa_\mu U(t,x,\mu, \tilde x)$, $\pa_{x}  \pa_\mu U(t,x,\mu, \tilde x)$, $\pa_{\tilde x}  \pa_\mu U(t,x,\mu, \tilde x)$ and $ \pa_{\mu\mu} U(t,x,\mu, \bar x, \tilde x)$ exist and are continuous. 
\begin{rem}
\label{rem-compact}
{\rm
For fixed $\mu\in \cP_2$,  by \reff{pamu} $\pa_\mu U(\mu, \cd)$ is unique $\mu$-a.s. However, for $U\in C^1(\cP_2)$,  $\pa_\mu U(\mu, x)$ is unique for all $(\mu, x)$. We note that our notion of $C^1(\cP_2)$ requires continuity in pointwise sense, which is stronger than the continuity in $\dbL^2$-sense required in some existing works, see e.g.  \cite{CDLL}.
\qed}
\end{rem}

From now on, we fix a  filtered probability space $(\O,\cF, \dbF, \dbP)$, on which are defined independent $d$-dimensional Brownian motions $B$ and $B^0$. We assume $\cF_0$ is rich enough to support any $\mu\in \cP_2$. Denote $\cF^0_t := \cF^{B^0}_t$ and $\cF_t := \cF_0 \vee \cF^B_t \vee\cF^0_t$. For any $p\ge 1$, $\cG\subset \cF$, and $\mu\in \cP_p$, denote by $\dbL^p(\cG)$ the set of $\dbR^d$-valued, $\cG$-measurable, and $p$-integrable random variables $\xi$; and $\dbL^p(\cG;\mu)$ the set of those $\xi\in \dbL^p(\cG)$ with $\cL_\xi=\mu$. Throughout the paper, given $\xi \in \dbL^p(\cF_t)$, we use $\tilde \xi, \bar \xi$ etc to denote conditionally independent copies of $\xi$ (by possibly extending to product sample space), conditional on $\cF^0_t$, and $\tilde \dbE, \bar \dbE$ are the conditional expectations which integrate only on $\tilde \xi, \bar\xi$, respectively, conditional on $\cF^0_t$. Moreover, we fix a constant $\b \ge 0$. 

One crucial property of $U\in C^{1,2,2}(\Th)$ is the It\^{o}  formula. For $i=1,2$, let  $d X^i_t= b^i_t dt + \si^i_t dB_t + \si^{i,0}_t d B^0_t$, where $b^i: [0, T]\times \O\to \dbR^d$ and $\si^i, \si^{i,0}: [0, T]\times \O\to \dbR^{d\times d}$ are $\dbF$-progressively measurable and bounded (for simplicity), then (cf., e.g.,  \cite[Theorem 4.17]{CD2}):  denoting $\rho_t:=\mathcal{L}_{X^2_t|\cF^{0}_t}$ as the conditional law,
\bea
\label{Ito}
&&d U(t, X^1_t, \rho_t) =  \Big[\pa_t U + \pa_x U\cdot b^1_t + \frac{1}{2} \tr\big(\pa_{xx} U [\si^1 (\si^1)^\top + \si^{1,0}(\si^{1,0})^\top]\big)\Big](t, X^1_t, \rho_t) dt \nonumber\\
&&+\tr\Big( \tilde \dbE_{\cF^0_t}\Big[\pa_\mu U(t,X^1_t,\rho_t,\tilde X^2_t) (\tilde b^{2}_t)^\top + \pa_x\pa_\mu U(t,X^1_t,\rho_t,\tilde X^2_t)\si^{1,0}_t (\tilde \si^{2,0})^\top\nonumber\\
&&+ \frac{1}{2} \pa_{\tilde x}\pa_\mu U(t, X^1_t, \rho_t, \tilde X^2_t)[\tilde \si^2 (\tilde \si^2)^\top + \tilde \si^{2,0}(\tilde \si^{2,0})^\top]\Big]\Big)dt\\
&& +\frac{1}{2}\tr\Big((\tilde\dbE\times\bar\dbE)_{\cF^0_t}\big[\pa_{\mu\mu}U(t,X^1_t,\rho_t,\tilde X^2_t,\bar X^2_t) \tilde\si^{2,0}(\bar \si^{2,0})^\top\big]\Big]\Big)dt+\pa_xU(t,X^1_t,\rho_t)\cd\si^1_tdB_t\nonumber\\
&&+\Big[(\si^{1,0}_t)^\top\pa_xU(t,X^1_t,\rho_t)+\tilde{\mathbb E}\big[(\tilde \si^{2,0}_t)^\top\pa_\mu U(t,X^1_t,\rho_t,\tilde X^2_t) \big]\Big]\cd dB_t^0.\nonumber
\eea
Throughout this paper, the elements of $\dbR^d$ are viewed as column vectors; $\pa_x U, \pa_{\mu} U$ are also column vectors; $\pa_x \pa_\mu U := \pa_x \big[(\pa_\mu U)^\top\big]\in \dbR^{d\times d}$, where $^\top$ denotes the transpose, and similarly for the other second order derivatives; The notation $\cd$ denotes the inner product of column vectors. Moreover, the term $\pa_x U \cd \si^1_t dB_t$ means $\pa_x U \cd (\si^1_t dB_t)$, but we omit the parentheses for notational simplicity.

\subsection{The master equation}
We first introduce the mean field game, whose value function will be characterized by the master equation. Given $t\in [0, T]$, denote $B^t_s:= B_s-B_t$, $B^{0,t}_s:=B_s^{0}-B_t^0$, $s\in [t, T]$, and let $\cA_t$ be the set of bounded and progressively measurable and adapted controls $\a: [t, T]\times C([t, T]; \dbR^{2d}) \to \dbR^d$.  For any $\xi\in \dbL^2(\cF_t)$ and $\a\in  \cA_t$, consider the following SDE:
\bea
\label{Xth}
\dis X^{t,\xi, \a}_s = \xi + \int_t^s  \a_r( X^{t,\xi, \a}_\cd, B^{0, t}_\cd) dr + B^t_s+\b B^{0,t}_s,\q s\in [t, T],
\eea
We note that, by the adaptedness, the control $\a$ actually takes the form  $\a_r( X^{t,\xi, \a}_{[t,r]}, B^{0, t}_{[t, r]})$. By Girsanov Theorem, the above SDE has a unique weak solution. Consider the conditionally expected utility for the mean field game:
\bea
\label{Jxi}
\left.\ba{c}
 J(t,x,\xi; \a,  \a'):= \dbE^{\dbP}_{\cF^0_t}\Big[G(X^{t, x, \a'}_T,  \cL_{X^{t,\xi,\a}_T|\cF^0_T}) \\
\dis + \int_t^T \big[F(X^{t,x, \a'}_s, \cL_{X^{t,\xi,\a}_s|\cF^0_s}) - L(X_s^{t,x,\alpha'},\a'_s (X^{t,x,\a'}, B^{0, t})) \big] ds\Big],
\ea\right.
\eea
where $L:\dbR^d\times\dbR^d\to \dbR$ and $F, G: \dbR^d\times \cP_2\to \dbR$ are measurable in all variables. Here $\xi$ denotes the initial state of the ``other" players, $\a$ is the common control of the other players, and $(x, \a')$ correspond to the initial state and control of the individual player. 

When $\xi\in \dbL^2(\cF_0\vee \cF^B_t)$ is independent of $\cF^0_t$,  it is clear that $J(t,x,\xi; \a,  \a')$ is deterministic and is law invariant, that is, if $\xi'\in \dbL^2(\cF_0\vee \cF^B_t)$ with $\cL_{\xi'}=\cL_\xi$, then $J(t,x,\xi'; \a,  \a')=J(t,x,\xi; \a,  \a')$ for any $x, \a,\a'$. Therefore, by abusing the notation $J$ we may introduce:
\bea
\label{Jmu}
J(t,x,\mu; \a,  \a') := J(t,x,\xi; \a,  \a'),\q \xi\in \dbL^2(\cF_0\vee \cF^B_t, \mu).
\eea
Now for any $ (t,x,\mu)\in \Th$ and $\a\in \cA_t$,  we consider the following optimization problem:
\bea
\label{Va0}
\dis V( t,x,\mu;\a) := \sup_{\a'\in \cA_t}  J(t,x,\mu; \a, \a').
\eea

\begin{defn}
\label{defn-NE}
We say $\a^*\in \cA_t$ is a mean field equilibrium (MFE) of \reff{Va0} at $(t,\mu)$ if 
\beaa
V( t,x,\mu; \a^*) = J(t,x,\mu; \a^*, \a^*) ~\mbox{for $\mu$-a.e. $x\in \dbR^d$}.
\eeaa
\end{defn}
We remark that an MFE relies on $(t,\mu)$, but is universal for all $x$. When there is a unique MFE  for each $(t,\mu)$, denoted as $\a^*(t,\mu)$, then clearly the game problem leads to a value function:
\bea
\label{V}
V(t,x,\mu) := V( t,x,\mu; \a^*(t,\mu)).
\eea

Introduce the Hamiltonian $H$ corresponding to the Lagrangian $L$:
\bea
\label{H}
H(x, z) := \sup_{a\in \dbR^d} [  a\cd z - L(x, a) ],\q x, z\in \dbR^d.
\eea 
In light of the It\^{o} formula \reff{Ito}, the value function $V$ in \reff{V} is associated with the following master equation:
\bea
\label{master}
\left.\ba{c}
\dis \cL V(t,x,\mu) :=\pa_t V + \frac{\h\b^2}{2} \tr(\pa_{xx} V) + H(x,\partial_x V) + F(x,\mu) + \cM V =0,\\
\dis V(T,x,\mu) = G(x,\mu),\q\mbox{where}\\
\dis \cM V(t,x,\mu)  := \tr\Big(\tilde  \dbE\Big[\frac{\h\b^2}{2} \pa_{\tilde x} \pa_\mu V(t,x, \mu, \tilde \xi) + \pa_\mu V(t, x, \mu, \tilde \xi)(\pa_pH)^\top(\tilde \xi, \pa_x V(t, \tilde \xi, \mu))\\
\dis  +\b^2\pa_x\pa_\mu V(t,x,\mu,\tilde \xi)+\frac{\b^2}{2}\bar{\mathbb E}\big[\pa_{\mu\mu}V(t,x,\mu,\bar\xi,\tilde\xi)\big]\Big]\Big),\q\mbox{and}\q \h \b^2 := 1+\b^2.
\ea\right.
\eea
Here the  term $\pa_p H$ is the derivative with respect to $z$, so it is also natural to denote it as $\pa_z H$, but nevertheless we use $\pa_p H$ as in standard PDE literature. 

On the opposite direction, assume  the data $F$, $G$, $L$, $H$ satisfy appropriate technical conditions and the master equation \reff{master} has a classical solution $V\in C^{1,2,2}(\Th)$. Then, for any $(t,\mu)\in [0, T]\times \cP_2$, the following $\a^*$ is an MFE at $(t, \mu)$ (see e.g. \cite{CD1}):
\bea
\label{alpha*}
\left.\ba{c}
\dis \a^*_s(\bx, B^{0,t}) :=  \pa_pH\big(\bx_s,\pa_x V(s,\bx_s,\cL_{X^*_s|\cF^0_s})\big),\q\bx\in C([t, T]; \dbR^d);\\
\dis \mbox{where}\q X^*_s = \xi + \int_t^s \pa_pH\big(X^*_r,\pa_x V(r, X^*_r,\cL_{X^*_r|\cF^0_r})\big) dr + B^t_s + \b B^{0,t}_s,\q s\in [t, T].
 \ea\right.
 \eea
However, we note that it requires very strong technical conditions on data in order to obtain a classical solution of the master equation \reff{master}. See Example \ref{eg-nonclassical} for a counterexample. Our goal of this paper is to investigate new weak notions of solutions and establish their wellposedness under mild regularity conditions.

\begin{rem}
\label{rem-global}
{\rm We emphasize that the term $\tilde  \dbE\Big[ \pa_\mu V(t, x, \mu, \tilde \xi)(\pa_pH)^\top(\tilde \xi, \pa_x V(t, \tilde \xi, \mu))\Big]$ in $\cM V$ of \reff{master} involves $\pa_x V(t, \tilde x, \mu)$ for $\mu$-a.e. $\tilde x$. That is,  the master equation \reff{master} is non-local in $x$. In particular, we cannot expect a comparison principle for its solution, even if there exists a unique classical solution. See Example \ref{eg-comparison} for a counterexample.
\qed}
\end{rem}

\begin{rem}
\label{rem-control}
{\rm The choice of admissible controls $\cA_t$ is actually very subtle, and the MFEs under different choices are in general not equivalent, 
see \cite{IZ} for more discussions.  However, we would like to point out  that in applications admissible controls should depend on the observed information. Note that players typically observe the state process $X$, and since $B^0$ is interpreted as the common noise, thus it is also reasonable to assume its observability (compared to the individual noises of the other players which are much harder to observe). So in this mean field setting one natural choice could be $\a = \a(X, B^0, \cL_{X|\cF^0})$. However, since $\cL_{X|\cF^0}$ is $\dbF^{0}$-measurable, then the above $\a$ is actually $\dbF^{X, B^0}$-measurable and for simplicity in this paper we take the form $\a=\a(X_t, B^0)$ as in \reff{Xth}. We emphasize that, however, for $N$-player games these two are not equivalent and it will be more natural to choose the counterpart of $\a = \a(X_t, B^0, \cL_{X_t|\cF^0})$, as we will do in the next subsection.
\qed}
\end{rem}

The master equation \reff{master} is associated with the following system of Forward Backward Stochastic PDEs (FBSPDEs ): given $t_0\in [0, T]$ and considering the equations on $[t_0, T]\times \dbR^d$,
\bea
\label{SPDE}
&&\dis \!\!\!\!\!\!\!\! d\rho(t,x) = \big[\frac{\h\beta^2}{2}\tr\big( \pa_{xx} \rho(t,x)\big) - div(\rho(t,x) \pa_p H(x,\pa_x u(t,x)))\big]dt-\b\partial_x\rho(t,x)\cd d B_t^0;\\
&&\dis \!\!\!\!\!\!\!\! d u(t, x)=  v(t,x)\!\cd\! dB_t^0\! -\! \big[\tr\big(\frac{\h\beta^2}{2} \pa_{xx} u(t,x) \!+\! \b\partial_x v^\top(t,x)\big) \!+\! H(x,\pa_x u(t,x)) \!+\! F(x, \rho(t,\cdot))\big]dt;\nonumber\\
&&\dis \!\!\!\!\!\!\!\! \rho(t_0, \cd) = \rho_0,\q u(T,x) = G(x, \rho(T,\cdot)),\nonumber
\eea
where $\rho_0:\O\to \cP_2$ is $\cF^0_{t_0}$-measurable. 
Here the first equation is a standard (forward) SPDE with solution $\rho$,   the second equation is a backward SPDE with solution pair $(u, v)$ taking values in $\dbR\times \dbR^d$, and  $\rho, u, v$ are all $\dbF^{0}$-progressively measurable, but we sometimes omit the variable $\o$. Moreover, $\rho_t =\rho_t(\o)= \rho(t,\cd, \o)$ is a (random) probability measure and when needed can be viewed as a weak solution to the SPDE:
\bea
\label{SPDEweak}
\left.\ba{c}
\dis d \int_{\mathbb R^d} \varphi(t,x)\rho(t,dx)= \b\int_{\mathbb R^d}\pa_x\varphi(t,x)\rho(t,dx)\cd dB_t^0\\
\dis +\int_{\mathbb R^d}\big[\partial_t\varphi(t,x)+\tr\big(\frac{\h\beta^2}{2}\pa_{xx}\varphi(t,x)\big)+\pa_x\varphi(t,x) \cd\pa_pH(x,\pa_x u(t,x))\big]\rho(t,dx)~dt,
\ea\right.
\eea
for any $\varphi\in C_c^{1,2}([0,T]\times\mathbb R^d)$.  Similarly, we may define the weak solution to the BSPDE:
\bea
\label{BSPDEweak}
\left.\ba{c}
\dis d \int_{\mathbb R^d} u(t,x) \varphi(t,x)dx= \int_{\mathbb R^d}\Big[u(t,x)\partial_t\varphi(t,x) +[\frac{\h\beta^2}{2} \pa_x u(t,x) + \b v(t,x)]\cd \pa_{x}\varphi(t,x)\\
\dis  -  [H(x,\pa_x u(t,x)) + F(x, \rho(t,\cdot))] \varphi(t,x) \Big]dx~dt + \int_{\mathbb R^d} v(t,x) \varphi(t,x)dx\cd dB_t^0.
\ea\right.
\eea
Then, provided the master equation \reff{master} has a classical solution $V$, we have the following relation for any fixed $(t_0, \rho_0)$:
\bea
\label{Vuv}
u(t,x,\o) = V(t, x, \rho_t(\o)). 
\eea

Alternatively, given $t_0$ and $\xi\in \dbL^2(\cF_{t_0})$, we may consider the following forward backward McKean-Vlasov SDEs on $[t_0, T]$: noting that $d B^{t_0}_t = dB_t$ and $d B^{0, t_0}_t= dB^0_t$, 
\bea
\label{FBSDE1}
\left.\ba{c}
\dis X^{\xi}_t=\xi + \int_{t_0}^t\pa_pH(X_s^\xi,Z_s^{\xi})ds+B^{t_0}_t+\b B_t^{0,t_0}; \\
\dis Y_t^{\xi}=G(X_T^{\xi},\rho_T)+\int_t^T[F(X_s^{\xi},\rho_s) -\wh L(X_s^{\xi},Z_s^{\xi})]ds- \int_t^TZ_s^{\xi}\cd dB_s-\int_t^TZ_s^{0,\xi}\cd dB_s^{0};\ms\\
\dis \mbox{where}\q \wh L(x, z) := L(x, \pa_p H(x,z)) = z\cd \pa_p H(x,z) - H(x,z),\q \rho_t := \rho^\xi_t:= \cL_{X_t^\xi|\cF^0_t}.
\ea\right.
\eea
Given the above $\rho$, we consider further the following standard decoupled FBSDE:
\bea
\label{FBSDE2}
&\dis X_t^{x}=x+ B^{t_0}_t+\b B_t^{0,t_0}; \\
&\dis Y_t^{x,\xi}=G(X_T^{x},\rho_T)+\!\! \int_t^T\!\!  [F(X_s^{x},\rho_s) + H(X_s^{x},Z_s^{x,\xi})]ds-\!\int_t^T\!\! Z_s^{x,\xi}\cd dB_s-\!\int_t^T\!\! Z_s^{0,x,\xi}\cd dB_s^{0}.\nonumber
\eea
The connection between \reff{SPDE} (hence \reff{master}) and \reff{FBSDE1}-\reff{FBSDE2} is that the $\rho_t$ in the two equations coincide and 
\bea
\label{uvYZ}
\left.\ba{c}
\dis Y^{\xi}_t = u(t, X^{\xi}_t, B^{0}), \q Z^{\xi}_t =  \pa_x u(t, X^{ \xi}_t, B^{0}),\q Z^{0,\xi}_t = [v + \b  \pa_x u](t, X^{ \xi}_t, B^{0}); \\
\dis Y^{x, \xi}_t = u(t, X^{ x}_t, B^{0}), \q Z^{x, \xi}_t =  \pa_x u(t, X^{ x}_t, B^{0}), \q Z^{0,x, \xi}_t = [v + \b  \pa_x u](t, X^{x}_t, B^{0}).
\ea\right.
\eea

Occasionally we may rewrite $X^x=X^{x,\xi}$ for notational consistency with $Y^{x,\xi}$ etc. When there is a need to emphasize the dependence on $t_0$, we will denote the solutions to \reff{FBSDE1}-\reff{FBSDE2} as $\Phi^{t_0,\xi}, \Phi^{t_0, x,\xi}$, $\Phi = X, Y, Z, Z^0$. 
We note that the decoupled FBSDE \reff{FBSDE2} can be replaced with the following coupled FBSDE which seems natural but is harder to analyze (please notice the notation $\Phi^{\xi,x}$ below is different from $\Phi^{x,\xi}$ in \eqref{FBSDE2}):
\bea
\label{FBSDE3}
\left.\ba{c}
 \dis X_t^{\xi,x}=x+\int_{t_0}^t\pa_pH(X_s^{\xi,x},Z_s^{\xi,x})ds+ B^{t_0}_t+\b B_t^{0,t_0}; \\
\dis Y_t^{\xi,x}=G(X_T^{\xi,x},\rho_T)+\!\int_t^T\!\![F(X_s^{\xi,x},\rho_s)- \wh L(X_s^{\xi,x},Z_s^{\xi,x})]ds\\
\dis-\!\int_t^T\!\! Z_s^{\xi,x}\cd dB_s-\!\int_t^T\!\! Z_s^{0,\xi,x}\cd dB_s^{0}.\qq\q\,\,\,\,
\ea\right.
\eea
However, we emphasize that we cannot replace the coupled McKean-Vlasov FBSDE \reff{FBSDE1} with a decoupled one like \reff{FBSDE2}, due to the involvement of the conditional law $\rho$. In fact, this is the main difficulty for studying the master equation. 

\subsection{The Nash system}
One of the most important applications of the mean field game and the master equation is to characterize the asymptotic behavior of the $N$-player game for a large interacting particle system. For $t_0\in [0, T]$, ${\vec x} =(x_1,\cds, x_N)\in \dbR^{N\times d}$, and ${\vec \a} = (\a^1,\cds, \a^N): [t_0, T]\times \dbR^d\times \cP_2\to \dbR^{N\times d}$, consider the following game problem for controlled interacting system over $[t_0, T]$:
\bea
\label{NashXa}
\left.\ba{c}
\dis X_{t}^{\vec x, \vec \a, i}=x_i + \int_{t_0}^t\a^i\big(s, X_s^{\vec x, \vec \a, i}, \mu_s^{\vec x, \vec \a, i}\big) ds+ B^{i,t_0}_t+ \beta B_t^{0,t_0}, ~\mu_s^{\vec x, \vec \a, i}:= {1\over N-1} \sum_{j\neq i} \d_{X_s^{\vec x, \vec \a, j}}\\
\dis J^N_i(t_0, \vec x, \vec \a) := \dbE\Big[G(X_T^{\vec x, \vec \a, i}, \mu_T^{\vec x, \vec \a, i}) + \int_{t_0}^T \big[F(X_s^{\vec x, \vec \a, i}, \mu_s^{\vec x, \vec \a, i})\\
\dis - L\big(X_s^{\vec x, \vec \a, i}, \a^i(s, X_s^{\vec x, \vec \a, i}, \mu_s^{\vec x, \vec \a, i})\big)\big] ds \Big]
\ea\right.
\eea
where $B^0, B^1, \cds, B^N$ are independent $d$-dimensional Brownian motions. As explained in Remark \ref{rem-control}, here the controls depend on $\mu$ as well. However, for simplicity  we are using state dependent controls only. Under the conditions of this paper, this restriction does not change the game problem, and we refer to \cite{IZ} for discussions on the subtly of path dependent controls in general case. The equilibrium ${\vec \a^*}$ is defined in the standard way:
\bea
\label{NEa*}
J^N_i(t_0, \vec x, \vec \a^*) = \sup_{\a^i} J^N_i(t_0, \vec x, \vec \a^{*,-i}, \a^i),\q i=1,\cds, N, 
\eea
where $\vec \a^{-i} := (\a^1,\cds, \a^{i-1},\a^{i+1}, \cds, \a^N)$. Under appropriate conditions, $\vec \a^*= \vec \a^*(t_0, \vec x)$ is unique for all $(t_0, \vec x)$. Then we may define the value function of the $N$-player game:
\bea
\label{Nashvalue}
v^{N,i}(t, \vec x) := J^N_i(t, \vec x, \vec \a^*(t,\vec x)),\q i=1,\cds, N.
\eea
We emphasize that, unlike \reff{Jxi}, the $J^N_i$ in \reff{NEa*}  is deterministic and hence so is $v^{N,i}$.

The above value functions $\{v^{N, i}\}_{1\le i\le N}$ satisfy  the following Nash system $[0,T]\times \dbR^{N\times d}$:
\bea
\label{Nash}
  \left.\begin{array}{c} 
\dis\mathcal{L}^{N,i}v^{N,i}(t, \vec x) =0,\q v^{N,i}(T,\vec x)=G(x_i,m_{\vec x}^{N,i}),\q\mbox{where}\\
\dis \mathcal{L}^{N,i}v^{N,i}(t, \vec x) := \pa_t v^{N,i} +\frac{1}{2}\sum_{j=1}^N \tr(\pa_{x_jx_j}v^{N,i}) +\frac{\b^2}{2}\sum_{j,k=1}^N\tr(\pa_{x_jx_k}v^{N,i})\\
\dis +H(x_i,\pa_{x_i}v^{N,i}) +F(x_i,m_{\vec x}^{N,i})+\sum_{j\not=i}\pa_pH(x_j,\pa_{x_j}v^{N,j})\cd\pa_{x_j}v^{N,i},\\
\dis m_{\vec x}^{N,i}:=\frac{1}{N-1}\sum_{j\not=i}\delta_{x_j}.
\end{array}
  \right.
\eea
Note that the system is symmetric with respect to $i$ and ${\vec x}_{-i}:=(x_1,...,x_{i-1},x_{i+1},...,x_N)$. Then, when the system is wellposed, the solution $v^{N,i}$ should also be symmetric on $i$ and ${\vec x}_{-i}$, that is,   there exists a function $U^N: \dbR^d\times \cP_2\to \dbR$, independent of $i$, such that 
\bea
\label{uvN}
v^{N,i}(t, \vec x) = U^N(t, x_i, m^{N,i}_{\vec x}),\q i=1,\cds, N.
\eea
 When $U^N$ is smooth, one can easily check that: for $j, k \neq i$ and $j\neq k$,
\bea
\label{pamuuN}
\left.\ba{c}
\dis \pa_{x_i} v^{N,i}(t, \vec x)= \pa_x U^N(t, x_i,m^{N,i}_{\vec x}),\,\,   \pa_{x_j} v^{N,i}(t, \vec x)=\frac{\pa_{\mu} U^N(t, x_i,m^{N,i}_{\vec x}, x_j) }{N-1},\\
\dis  \pa_{x_i x_i} v^{N,i}(t, \vec x)=\pa_{xx} U^N(t, x_i,m^{N,i}_{\vec x}),\,\, \pa_{x_ix_j} v^{N,i}(t, \vec x)=\frac{\pa_{x\mu} U^N(t, x_i,m^{N,i}_{\vec x}, x_j)}{N-1},\\
\dis  \pa_{x_jx_j} v^{N,i}(t, \vec x)=\frac{\pa_{\mu\mu}U^N(t,x_i,m_{\vec x}^{N,i},x_j,x_j)}{(N-1)^2}+\frac{\pa_{\tilde x\mu} U^N(t, x_i,m^{N,i}_{\vec x}, x_j)}{N-1},\\
\dis \pa_{x_j x_k} v^{N,i}(t, \vec x)=\frac{\pa_{\mu\mu} U^N(t, x_i,m^{N,i}_{\vec x}, x_j, x_k)}{(N-1)^{2}},\,\, \pa_t v^{N,i}(t, \vec x)=\pa_t U^N(t, x_i,m^{N,i}_{\vec x}).
\ea\right.
\eea
Then we may rewrite the Nash system \reff{Nash} as a discrete master equation:
\bea
\label{Nash2}
\mathcal{L}^{N}U^N(t,x_i, m^{N,i}_{\vec x}) =0,\q U^N(T, x_i, m^{N,i}_{\vec x}) = G(x_i, m^{N,i}_{\vec x}),
\eea
where, for $\tilde \xi, \bar \xi$ being independent with distribution $m^{N,i}_{\vec x}$,
\beaa
&\dis \mathcal{L}^{N}U^N(t,x_i, m^{N,i}_{\vec x}):= \pa_t U^N + \frac{\h\b^2}{2}\tr(\pa_{xx} U^N) + H(x_i,\partial_x U^N) + F(x_i,m^{N,i}_{\vec x})\\
&\dis  + \tr\Big(\tilde  \dbE\Big[\frac{\h\b^2}{2} \pa_{\tilde x} \pa_\mu U^N(t,x_i, m^{N,i}_{\vec x}, \tilde \xi) + \pa_\mu U^N(t, x_i, m^{N,i}_{\vec x}, \tilde \xi)(\pa_pH)^\top(\tilde \xi, \pa_x U^N(t, \tilde \xi, m^{N,i}_{\vec x}))\\
&\dis  +\b^2\pa_{x\mu} U^N(t,x_i,m^{N,i}_{\vec x},\tilde \xi)+\frac{\b^2}{2}\bar{\mathbb E}\big[\pa_{\mu\mu}U^N(t,x_i,\mu,\bar\xi,\tilde\xi)\big]\Big]\Big)\\
&\dis +\frac{\sum_{j\neq i}\tr\big(\pa_{\mu\mu}U^N(t,x_i,m_{\vec x}^{N,i},x_j,x_j)\big)}{2(N-1)^2} + {\pa_{\mu}U^N(t,x_i,m_{\vec x}^{N,i},x_j)\over N-1} \cdot\\
&\dis  \sum_{j\neq i}\big[\pa_p H(x_j,\pa_x U^N(t,x_j,m_{\vec x}^{N,j}))-\pa_pH(x_j,\pa_{x}U^N(t,x_j,m_{\vec x}^{N,i}))\big].
\eeaa

Similarly, we may express the Nash system \reff{Nash} in terms of FBSDEs.  Fix $(t_0, \vec x)\in [0, T]\times \dbR^{N\times d}$, and consider  the following  two systems  of FBSDEs on $[t_0,T]$: 
\bea
\label{NashFBSDE1}
&&\dis  \left\{% [inline block 0: 1 envs, 183380 chars -> data_tex | \begin{array}{lll}    \dis X_{t}^{i,\vec x}=x_i +B^{i,t_0}_t+ \beta B_t^{0,t_0},\q X^{\rightarrow,\vec x}:= (X^{1,\vec x...]

  \right.
\eea
Note that $|\pa_x G|\le L_1$, $|\pa_x F|\le L_1$, and $|\pa_x H_R|\le \tilde L^H_1(R+1)$,  one can easily see that
\beaa
|\pa_x u(t_0, x)| = |\td  Y^{x}_{t_0}| \le L_1[1+T] + T\tilde L^H_1(R+1).
\eeaa
Note that
\beaa
\limsup_{R\to\infty} {L_1[1+T] + T\tilde L^H_1(R+1)\over R} =\limsup_{R\to\infty} { T\tilde L^H_1(R+1)\over R+1} = Tc^H_1 < 1.
\eeaa
We may choose $R>0$ large enough  such that 
\beaa
|\pa_x u(t_0, x)|  \le L_1[1+T] + T\tilde L^H_1(R+1) \le R.
\eeaa
This proves \reff{pauest1} by setting $C_1=R$.   Moreover, since $|Z^x_t|= |\pa_x u(t, X^x_t)|\le R$, we see that $H_R(X^x_t, Z^x_t)= H(X^x_t, Z^x_t)$. Thus $(X^x, Y^x, Z^x,Z^{0,x})$ actually satisfies \reff{FBSDE-x}.

(ii) First by (i) we see that \reff{pauest1} holds and \reff{FBSDE-pax-modified} is the same as \reff{FBSDE-pax}. Next, by \cite[Theorem 6.1] {MYZ}  the above $u$ is a weak solution to the BSPDE in \reff{SPDE} with coefficient $H_R$ instead of $H$. However, since $|\pa_x u|\le R$, so $H_R(x,\pa_x u) = H(x, \pa_x u)$, and thus $u$ satisfies the BSPDE in \reff{SPDE} with coefficient $H$. The relation \reff{FBSDE-xu} also follows from \cite{MYZ}.

(iii) Fix $R$ as in (i). First, applying standard BSDE estimates on \reff{FBSDE-pax-modified} we see that
\bea
\label{tdZest}
\dbE_{t_0}\Big[ \Big(\int_{t_0}^T |\nabla Z^x_s|^2ds\Big)^2\Big] \le C,\q \mbox{a.s.}
\eea
Next, by standard stability arguments, we may assume without loss of generality that $F, G$ and $H$ are twice differentiable in $x$. Then we have $\pa_{xx} u(t_0, x) = \td^2  Y^x_{t_0}$, where, by differentiating \reff{FBSDE-pax-modified} formally in $x$:
\bea
\label{FBSDE-paxx-modified}
  \left.\begin{array}{ll} 
\dis \nabla^2 Y_t^{x}=\pa_{xx} G( X_T^{x},\rho_T)  - \int_t^T\sum_{i=1}^d \big[\nabla^2 Z_s^{i,x}dB_s^{i, t_0} +\nabla^2Z_s^{0,i, x}dB_s^{0,i, t_0}\big] \\
\dis\q +\int_t^T\Big[\pa_{xx} F(X_s^{x},\rho_s) + \pa_{xx} H_{R}(\cd) + 2\nabla Z_s^{x} \pa_{xp} H_{R}(\cd)  \\
\dis \q +  \nabla  Z_s^{x} \pa_{pp} H_{R}(\cd)  [\nabla Z_s^{x}]^\top+  \sum_{i=1}^d \nabla^2 Z_s^{i, x} \pa_{p_i} H_{R}(\cd) \Big](X_s^{x},Z_s^{x})ds.
\end{array}
  \right.
\eea
Denote $M^x_T := \exp\big(\int_{t_0}^T \pa_{p} H_{R}( X_s^{x},Z_s^{x}) \cd dB_s^{t_0} - {1\over 2} \int_{t_0}^T| \pa_{p} H_{R}(X_s^{x}, Z_s^{x})|^2 ds\Big)$. Then
\beaa
\nabla^2Y_{t_0}^{x} &=& \dbE_{t_0} \Big[M^x_T \pa_{xx} G( X_T^{x},\rho_T)  + M^x_T\int_{t_0}^T\big[\pa_{xx} F( X_s^{x},\rho_s)\\
&&+ \pa_{xx} H_{R}(\cd) + 2\nabla  Z_s^{x} \pa_{xp} H_{R}(\cd) +  \nabla Z_s^{x} \pa_{pp} H_{R}(\cd)  [\nabla Z_s^{x}]^\top \big]( X_s^{x},Z_s^{x})ds\Big].
\eeaa
Thus, by \reff{tdZest},
\beaa
|\pa_{xx} u(t_0, x)| &=& |\nabla^2  Y_{t_0}^{x}| \le C\dbE_{t_0} \Big[M^x_T   + M^x_T \int_{t_0}^T\big[1+ |\nabla  Z_s^{x}|^2\big]ds\Big]\\
&\le& C + C\Big( \dbE_{t_0} [|M^x_T|^2 ]\Big)^{1\over 2} \Big( \dbE_{t_0}\Big[ \big(\int_{t_0}^T |\nabla  Z^x_s|^2ds\big)^2\Big] \Big)^{1\over 2} \le C.
\eeaa
This is the required estimate. 
\qed

 {\bf Proof of Proposition \ref{prop-local}.}  (i) Let $R$ and $H_R, \h L_R$ be as in the proof of Proposition \ref{prop-classical-x} (i) and (ii). Note that $\pa_p H_R$ and $\h L_R$ are uniformly Lipschitz continuous. Then by the standard contraction mapping arguments we see that the FBSDE  \reff{FBSDE1} has a unique solution $(X^\xi, Y^\xi, Z^\xi,Z^{0,\xi})$, whenever $T\le \d_1$. Now denote $\rho_t := \cL_{X^\xi_t |\cF^{0}_t}$,  then the rest of the results follow immediately from Proposition \ref{prop-classical-x}.

(ii) Again it suffices to prove the result for $H_R$. In this case the existence of classical solution $V$ follows directly from  \cite[Theorems 5.10 and 5.11]{CD2}. 
\qed

\subsection{Some results in Section \ref{sect-weak}}
\begin{prop}
\label{prop-weak}
Assume $b: [0, T]\times \dbR^d \times \cP_2\to \dbR^d$ is continuous in all variables and bounded. Then the following PDE has a weak solution:
\bea
\label{SPDEb}
d\rho(t,x) = \Big[\frac{1}{2}\tr\big( \pa_{xx} \rho(t,x)\big) - div(\rho(t,x) b(t,x,\rho_t))\Big]dt,\q \rho(0,\cd) =\rho_0.
\eea
\end{prop}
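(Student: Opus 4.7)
(Sketch / plan of proof.) Since there is no common noise in \reff{SPDEb}, the flow $t\mapsto\rho_t$ must be deterministic. The natural strategy is to recast \reff{SPDEb} as a fixed point problem on a suitable space of measure flows and apply Schauder--Tychonoff. Concretely, let $\xi\in\dbL^2(\cF_0,\rho_0)$ and set
\[
E:=\Big\{\mu\in C([0,T];\cP_1):~\mu_0=\rho_0,~\cW_1(\mu_t,\mu_s)\le \|b\|_\infty|t-s|+C_0\sqrt{|t-s|}~\mbox{for all }s,t\Big\},
\]
with $C_0$ chosen momentarily. The set $E$ is a convex, closed, and compact subset of $C([0,T];\cP_1)$ (compactness follows from the uniform modulus of continuity in time together with the fact that $\cP_1^M$ is compact in $(\cP_1,\cW_1)$ for each $M$, which is guaranteed by the uniform second-moment bound derived below).

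For each $\mu\in E$, define $X^\mu$ as the unique (in law) weak solution to the SDE $dX_t=b(t,X_t,\mu_t)\,dt+dB_t$ with $X_0=\xi$. Since $b$ is bounded and Borel measurable in $(t,x)$, the Girsanov theorem produces such a weak solution on the canonical probability space carrying the Brownian motion, and uniqueness in law is standard. Then define
\[
\Phi:E\to E,\qquad \Phi(\mu)_t:=\cL_{X^\mu_t}.
\]
The boundedness of $b$ and the elementary estimate $\dbE[|X^\mu_t-X^\mu_s|]\le \|b\|_\infty|t-s|+\dbE[|B_t-B_s|]\le \|b\|_\infty|t-s|+C_0\sqrt{|t-s|}$ shows $\Phi(\mu)\in E$ for $C_0$ the Gaussian constant, and $\sup_{t\le T}\|\Phi(\mu)_t\|_2^2\le C(\|\rho_0\|_2^2+1)$ uniformly in $\mu$, giving the uniform second-moment bound needed for compactness of $E$ in $(\cP_1,\cW_1)$.

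Next I would verify that $\Phi$ is continuous on $E$. If $\mu^n\to\mu$ in $E$ (uniformly in $t$ under $\cW_1$), then since $b$ is continuous in $(x,\mu)$ and bounded, $b(t,x,\mu^n_t)\to b(t,x,\mu_t)$ pointwise in $(t,x)$. Using the Girsanov representation for $X^{\mu^n}$ and $X^\mu$ on the same canonical space, bounded convergence yields convergence in law of the densities of $X^{\mu^n}$ to that of $X^\mu$, in particular $\cW_1(\cL_{X^{\mu^n}_t},\cL_{X^\mu_t})\to 0$, uniformly in $t$ by a standard equicontinuity argument (all these measures lie in the common time-compact set $E$). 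Schauder--Tychonoff then delivers a fixed point $\rho=\Phi(\rho)\in E$.

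Finally, by construction $\rho_t=\cL_{X^\rho_t}$ where $dX^\rho_t=b(t,X^\rho_t,\rho_t)\,dt+dB_t$; applying It\^o's formula to $\f(t,X^\rho_t)$ for $\f\in C^{1,2}_c([0,T]\times\dbR^d)$ and taking expectation gives precisely the weak formulation \reff{SPDEweak} (with $\b=0$), so $\rho$ is a weak solution to \reff{SPDEb}. The main obstacle is the continuity of $\Phi$ in the weak topology on measure flows when $b$ is only continuous in $\mu$ (no Lipschitz estimate), which is why the argument goes through Girsanov/bounded convergence rather than a contraction, and why we work under $\cW_1$ (where $E$ is compact) rather than $\cW_2$.
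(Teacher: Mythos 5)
Your proposal is essentially the paper's own proof: construct $X^\mu$ by Girsanov from the bounded drift $b(\cdot,\cdot,\mu_\cdot)$, show the map $\Phi(\mu)_t=\cL_{X^\mu_t}$ is continuous with (pre)compact range thanks to the boundedness of $b$, apply Schauder, and identify the fixed point as a weak solution via It\^o's formula. The only difference is that the paper works directly in $C^0([0,T];\cP_2)$ under $\cW_2$, using the $\cW_2$-compactness of the image of $\Phi$ (via \cite[Lemma 4.1]{WZ}); this sidesteps the small mismatch in your version between the $\cW_1$-topology on your set $E$ and the continuity of $b$ on $\cP_2$ (a uniform second-moment bound alone does not upgrade $\cW_1$-convergence to $\cW_2$-convergence, so one should either restrict to the image of $\Phi$, where second moments are uniformly integrable, or argue under $\cW_2$ as the paper does).
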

\proof Fix $\xi\in \dbL^2(\cF_0, \rho_0)$. For any $\rho\in C^0([0, T]; \cP_2)$ with $\rho(0) = \rho_0$, set 
\beaa
&\dis X_t := \xi + B_t,\q B^\rho_t := B_t - \int_0^t b(s, X_s, \rho_s) ds,\\
&\dis {d\dbP^\rho\over d\dbP}:= M^\rho_T:= \exp\Big(\int_0^T  b(s, X_s, \rho_s)\cd  dB_s -{1\over 2} | b(s, X_s, \rho_s) |^2 ds\Big).
\eeaa
Then we may introduce a mapping $\Phi$ on $C^0([0, T]; \cP_2)$ by: $\Phi_t(\rho) := \dbP^\rho\circ X_t^{-1}$. By the continuity of $b$, it is clear that $\Phi$ is continuous. Moreover, since $b$ is bounded, by \cite[Lemma 4.1]{WZ}  the set $\{\Phi_t(\rho): \rho \in C^0([0, T]; \cP_2)\}$ is compact under $\cW_2$, for any $t\in [0, T]$. It is clear that $\cW_2(\Phi_s(\rho), \Phi_t(\rho)) \le C\sqrt{t-s}$ for all $0\le s<t\le T$.  Then the set $\{\Phi(\rho): \rho \in C^0([0, T]; \cP_2)\} \subset C^0([0, T]; \cP_2)$ is compact, under the metric $d(\rho, \rho') := \sup_{0\le t\le T} \cW_2(\rho_t, \rho'_t)$. Thus by Schauder fixed-point theorem we see that $\Phi$ has a fixed point $\rho$. It is clear that this $\rho$ is a weak solution to PDE \reff{SPDEb}. 
 \qed

\subsection{Some results in Section \ref{sect-convergence}}

\no{\bf Proof of Proposition \ref{prop-Nashwellposed}.}  We proceed in two steps. 

{\it Step 1.} Recall the truncation function $I_R$ and $H_R$ in the proof of Proposition \ref{prop-classical-x}. Denote $F_i(\vec x) := F(x_i, m^{N,i}_{\vec x})$, $G_i(\vec x) := G(x_i, m^{N,i}_{\vec x})$. For $n\ge 1$, let $F^n_i, G^n_i, H^n$ be the standard mollifier of $F_i, G_i, H$, which satisfy Assumptions \ref{assum-Lip1} and \ref{assum-H3}  uniformly in $n$.  Fix $(t_0, \vec x)$, recall  \reff{NashFBSDE1}, and consider the following system of BSDEs: $i=1,\cds, N$,
\bea
\label{NashFBSDEn}
 \left.\ba{lll}
\dis Y_{t}^{n, i,\vec x}=G^n_i(X_T^{\rightarrow,\vec x}) -\sum_{j=1}^N\int_t^TZ_{j,s}^{n,i,\vec x}\cd dB_s^{j}-\int_t^TZ_{s}^{n,0,i,\vec x}\cd dB_s^{0}\\
\dis+\int_t^T\Big[F^n_i(X_s^{\rightarrow,\vec x}) +H^n_n(X_{s}^{i,\vec x}, Z_{i,s}^{n,i,\vec x})+\sum_{j\neq i}I_n(Z_{j,s}^{n,i,\vec x})\cd\pa_{p}H_n^n(X_{s}^{j,\vec x},Z_{j, s}^{n, j, \vec x})\Big]ds.
\ea\right.
\eea
Obviously the above system is wellposed, and there exists a smooth function $u_n^i$ such that 
\beaa
Y_{t}^{n, i,\vec x} = u_n^i(t, X^{\rightarrow,\vec x}_t),\q Z_{j, t}^{n, i,\vec x} = \pa_{x_j} u_n^i(t, X^{\rightarrow,\vec x}_t),\q t\in [t_0, T].
\eeaa

We now derive the estimate:
\bea
\label{paun}
|\pa_{x_k} u_n^i| \le C_N,
\eea
 where  $C_N$  depends on $N$ and the parameters in the Assumptions, but not on $n$. By \reff{NashFBSDEn} we have
\beaa
&\dis u_n^i(t_0, \vec x) = Y_{t_0}^{n, i,\vec x} = \dbE\Big[G^n_i(X_{T}^{\rightarrow,\vec x})+\int_{t_0}^T\big[\sum_{j\neq i}I_n(\pa_{x_j} u_n^i(s, X^{\rightarrow,\vec x}_s))\cd\pa_{p}H_n^n(X_{s}^{j,\vec x},\pa_{x_j} u_n^j(s, X^{\rightarrow,\vec x}_s))  \\
&\dis + H^n_n(X_{s}^{i,\vec x}, \pa_{x_i} u_n^i(s, X^{\rightarrow,\vec x}_s))-H^n_n(X_{s}^{i,\vec x}, 0) +H^n_n(X_{s}^{i,\vec x}, 0) +F^n_i(X^{\rightarrow,\vec x}_s)  \big]ds\Big].
\eeaa
Note that $X^{\rightarrow,\vec x}_s$ has normal distribution and its components are conditionally independent, conditional on $\cF^0_s$. By integration by parts formula one can easily show that,
\beaa
\pa_{x_k}\dbE_{\cF^0_s}[\f(X^{\rightarrow,\vec x}_s)]= \dbE_{\cF^0_s}\Big[\f(X^{\rightarrow,\vec x}_s){B^{k,t_0}_s\over s-t_0}\Big],
\eeaa
 for any bounded and measurable function $\f$. Then
\bea
\label{paxun}
\left.\ba{c}
\dis \pa_{x_k} u_n^i(t_0, \vec x) =\dbE\Big[\pa_{x_k} G^n_i(X_{T}^{\rightarrow,\vec x})+  \int_{t_0}^T\big[\pa_{x_k}F^n_i(X^{\rightarrow,\vec x}_s) +\pa_{x_k}H^n_n(X_{s}^{i,\vec x}, 0)\\
\dis + [H^n_n(X_{s}^{i,\vec x}, \pa_{x_i} u_n^i(s, X^{\rightarrow,\vec x}_s))-H^n_n(X_{s}^{i,\vec x}, 0)]{B^{k, t_0}_s\over s-t_0} \\
\dis + \sum_{j\neq i}I_n(\pa_{x_j} u_n^i(s, X^{\rightarrow,\vec x}_s))\cd\pa_{p}H_n^n(X_{s}^{j,\vec x},\pa_{x_j} u_n^j(t, X^{\rightarrow,\vec x}_t)){B^{k, t_0}_s\over s-t_0}  \big]ds\Big].
\ea\right.
\eea
Denote $\G^n_s := \sup_{i, j}\sup_{\vec x} |\pa_{x_j} u_n^i (s,\vec x)|$. 
Then, by our assumptions,
\beaa
 |\pa_{x_k} u_n^i(t_0, \vec x)| \le C \dbE\Big[1+  \int_{t_0}^TN\G^n_s {|B^{t_0}_s|\over s-t_0} ds\Big] \le C + CN \int_{t_0}^T {\G^n_s \over \sqrt{s-t_0}}ds.
\eeaa
That is,
\beaa
\G^n_{t_0} \le C + CN \int_{t_0}^T {\G^n_s \over \sqrt{s-t_0}}ds,\q 0\le t_0\le T.
\eeaa
Then one can easily see  that $\sup_{0\le t\le T} \G^n_t \le C_N$, and hence \reff{paun} holds.

{\it Step 2.}  Now by \reff{NashFBSDEn}, we may view $u^i_n$ as a solution to the following heat equation:
\beaa
\pa_t u^i_n(t, \vec x) +{1\over 2} \sum_{j=1}^N\tr( \pa_{x_jx_j} u^i_n) + {\b^2\over 2}\sum_{j,k=1}^N\tr( \pa_{x_jx_k} u^i_n) + \tilde f^i_n(t, \vec x)=0,~ u^i_n(T,\vec x) = G^n_i(\vec x),
\eeaa
where, 
\beaa
\tilde f^i_n(t, \vec x) := \sum_{j\neq i}I_n(\pa_{x_j} u_n^i(t, \vec x))\cd\pa_{p}H_n^n(x_i,\pa_{x_j} u_n^j(t, \vec x))  + H^n_n(x_i, \pa_{x_i} u_n^i(t, \vec x)) +F^n_i(\vec x)
\eeaa
satisfies $|\tilde f^i_n|\le C_N[1+|\vec x|]$, thanks to \reff{paun}. Since $|\pa_{x_j}G^n_i|\le C_N$, then by standard PDE result we see that 
\bea
\label{paun2}
|\pa_{x_jx_k} u^i_n(t,\vec x)| \le {C_N\over \sqrt{T-t}}.
\eea
Now send $n\to \infty$, by \reff{paun} and \reff{paun2} it is clear that $u^i_n \to u^i$, $\pa_{x_j} u^i_n \to \pa_{x_j} u^i$ for some function $u^i$ such that $|\pa_{x_j} u^i|\le C_N$. Note that $I_n(\pa_{x_j} u^i) = \pa_{x_j} u^i$  for $n\ge C_N$, we see that 
\bea
\label{ui}
\left.\ba{c}
\dis \pa_t u^i(t, \vec x) +{1\over 2} \sum_{j=1}^N\tr( \pa_{x_jx_j} u^i )+ {\b^2\over 2}\sum_{j,k=1}^N\tr( \pa_{x_jx_k} u^i) + \tilde f^i(t, \vec x)=0, \\
\dis u^i(T,\vec x) = G_i(\vec x),\q\mbox{where}\\ 
\dis\tilde f^i(t, \vec x) := \sum_{j\neq i}\pa_{x_j} u^i(t, \vec x))\cd\pa_{p}H(x_i,\pa_{x_j} u^j(t, \vec x))  + H(x_i, \pa_{x_i} u^i(t, \vec x)) +F_i(\vec x)
\ea\right.
\eea
satisfies $|\tilde f^i|\le C_N[1+|\vec x|]$. Then we still have $u^i\in C^{1,2}([0, T)\times \dbR^{N\times d})$ and it satisfies \reff{paun2}. Note that \reff{ui} is exactly the Nash system \reff{Nash}, then $v_N^i:= u^i$ is a classical solution and \reff{Nashpaxv} holds. The uniqueness of classical solution satisfying \reff{Nashpaxv} is obvious.   

Finally, given the classical solution $v^i_N$, the wellposedness of \reff{NashFBSDE1}, \reff{NashFBSDE2}, and the relation \reff{YvN} are standard. 
 \qed

\ms
\no{\bf Proof of Lemma \ref{lem-empirical}.} First, by otherwise rescaling the problem, we may assume without loss of generality that $\|\cX\|_q =1$. Denote $\mu := \cL_\cX$, $\mu_i := \cL_{\cX_i}$, and $\mu^N := {1\over N} \sum_{i=1}^N \d_{\cX_i}$, which is a random measure. For any Borel set $A\subset \dbR^d$, note that
\[
\mu^N(A) = {1\over N} \sum_{i=1}^N \1_A(\cX_i).
\]
Since $\cX_1,\cds, \cX_N$ are independent,   then 
\beaa
&\dis\!\! \dbE\big[\mu^N(A)\big]= {1\over N}\sum_{i=1}^N\dbE\big[\1_A(\cX_i)\big]= {1\over N} \sum_{i=1}^N\mu_i(A) = \mu(A);\\
&\dis \!\! {\rm Var}\big[\mu^N(A)\big]={1\over N^2}\sum_{i=1}^N{\rm Var}\big[\1_A(\cX_i)\big] = {1\over N^2}\sum_{i=1}^N \mu_i(A)[1-\mu_i(A)]\leq {1\over N^2} \sum_{i=1}^N\mu_i(A) = {\mu(A)\over N}.
\eeaa
This implies:
\beaa
&\dbE\Big[\big|\mu^N(A) - \mu(A)\big|\Big] \le\dbE[\mu^N(A)] + \mu(A) =  2\mu(A);\\
&\dbE\Big[\big|\mu^N(A) - \mu(A)\big|\Big] \le \Big(\dbE\Big[\big|\mu^N(A) - \mu(A)\big|^2\Big]\Big)^{1\over 2} =  \Big({\rm Var}\big[\mu^N(A)\big]\Big)^{1\over 2} \le  \sqrt{\mu(A)\over N}.
\eeaa
Put together, we have
\begin{equation}\label{eq:minmin}
\dbE\Big[\big|\mu^N(A) - \mu(A)\big|\Big] \le \min\Big\{2\mu(A), \sqrt{\mu(A)\over N}\Big\}.
\end{equation}

We next introduce a partition $(B_n)_{n\geq 0}$ of $\mathbb R^d$: 
\beaa
B_0=(-1,1]^d,\q B_n=(-2^n,2^n]^d\setminus (-2^{n-1},2^{n-1}]^d,\quad n\geq 1,
\eeaa
and a sequence of partitions $(P_n)_{n\geq 0}$ of the hypercube $(-1,1]^d$ into $2^{dn}$ translations of the hypercube $(-2^{-n},2^{-n}]^d$. It is obvious that (recalling that we are assuming $\|\cX\|_q=1$),
\beaa
\mu(B_n) \le \mu\Big(\dbR^{Nd}\setminus (-2^{n-1},2^{n-1}]^d\Big) \le \dbP(|\cX|\ge 2^{-(n-1)})\le 2^{-q(n-1)}.
\eeaa
Moreover, using Cauchy-Schwarz inequality and the fact that the partition $P_m$ has exactly $2^{dm}$ elements, we deduce from \reff{eq:minmin} that for all $n,m\geq 0$
\bea
\label{eq:Che}
\left.\ba{c}
\dis \sum_{A\in P_m}\dbE\big[|\mu^N((2^n A)\cap B_n)-\mu((2^n A)\cap B_n)|\big]\\
\dis \le \min\Big[2\mu(B_n),\frac{2^{\frac{dm}{2}}}{\sqrt{N}}\sqrt{\mu(B_n)}\Big]\le \min\Big[2^{1-q(n-1)}, ~\sqrt{2^{dm-q(n-1)}\over N}\Big].
\ea\right.
\eea

Define
\begin{eqnarray*}
&\dis\mathcal{D}_p(\mu^N,\mu):=\sum_{n\ge 0}2^{pn}\Big[\big|\mu^N(B_n)-\mu(B_n)\big|\\
&\dis+{2^p-1\over 2}[\mu^N(B_n)\wedge\mu(B_n)]\sum_{m\ge 1}2^{-pm}\sum_{A\in P_m}\big|{\mu^N(2^nA\cap B_n)\over \mu^N(B_n)}-{\mu(2^nA\cap B_n)\over \mu(B_n)}\big|\Big].
\end{eqnarray*}
By \cite[Lemmas 5 and 6]{FG} there exists a constant $C>0$, depending only on $p, d$, such that
\beaa
&\dis \dbE\Big[\cW_p^p(\mu^N, \mu) \Big] \le C\dbE\Big[\mathcal{D}_p(\mu^N,\mu)\Big]\\
&\dis \le C\sum_{n\ge 0} 2^{pn} \sum_{m\ge 0} 2^{-pm} \sum_{A\in P_m}\dbE\big[|\mu^N((2^n A)\cap B_n)-\mu((2^n A)\cap B_n)|\big].
\eeaa
Plug \reff{eq:Che} into it, we obtain
\bea
\label{eq:DpmuNmu}
\dbE\Big[\cW_p^p(\mu^N, \mu) \Big] \ \le C\sum_{n\ge 0} 2^{pn} \sum_{m\ge 0} 2^{-pm}\min\Big[2^{-qn}, ~\sqrt{2^{dm-qn}\over N}\Big].
\eea
This is the formula (4) in \cite{FG}. Now following exactly line by line from Step 1 to Step 4 in the proof of  \cite[Theorem 1]{FG} we obtain the desired estimate \reff{empiricalConv}.
\qed

\bs
\no{\bf Proof of \reff{empiricald1}.} When $d=1$, we have a representation for $\cW_1$ (see, e.g. \cite{DGM}),
\beaa
\cW_1(\mu^N, \mu)= \int_{\dbR} \big|{1\over N} \sum_{i=1}^N \1_{\{\cX_i \le x\}} - \dbP(\cX\le x)\big| dx = {1\over N} \int_{\dbR} \big|\sum_{i=1}^N\eta^i(x) \big| dx,
\eeaa
where $\eta^i_x:=  \1_{\{\cX_i \le x\}} - \dbP(\cX_i\le x)$, $i=1,\cds, N$, are independent with $\dbE[\eta^i(x)]=0$. Then
\beaa
&&\dis \dbE\Big[\cW^2_1(\mu^N, \mu)\Big] ={1\over N^2} \dbE\Big[ \int_{\dbR^2} \big| \sum_{i=1}^N\eta^i(x_1) \big| \big|\sum_{i=1}^N\eta^i(x_2) \big| dx_1dx_2\Big]\\
&&\dis\le {1\over N^2}  \int_{\dbR^2} \Big(\dbE\big[\big| \sum_{i=1}^N\eta^i(x_1) \big|^2\big]\Big)^{1\over 2} \Big(\dbE\big[\big| \sum_{i=1}^N\eta^i(x_2) \big|^2\big]\Big)^{1\over 2} dx_1dx_2\Big]\\
&&\dis ={1\over N^2}  \Big(\int_{\dbR} \Big(\dbE\big[\big| \sum_{i=1}^N\eta^i(x) \big|^2\big]\Big)^{1\over 2} dx\Big)^2={1\over N^2}  \Big(\int_{\dbR} \Big(\sum_{i=1}^N\dbE[|\eta^i(x)|^2]\Big)^{1\over 2} dx\Big)^2\\
&&\dis={1\over N^2}  \Big(\int_{\dbR} \Big(\sum_{i=1}^N\dbP(\cX_i \le x)\dbP(\cX_i>x)\Big)^{1\over 2} dx\Big)^2\\
&&\dis \le {1\over N^2}  \Big(\int_{\dbR} \Big(\sum_{i=1}^N\dbP(|\cX_i| \ge |x|)\Big)^{1\over 2} dx\Big)^2 ={1\over N}  \Big(\int_{\dbR} \Big(\dbP(|\cX| \ge |x|)\Big)^{1\over 2} dx\Big)^2\\
&&\dis \le {2\over N}  \Big( 1+\int_1^\infty \Big(\dbE\big[{|\cX|^q \over |x|^q}\big]\Big)^{1\over 2} dx\Big)^2\le {C\over N}\big[1+\dbE[|\cX|^q]\big].
\eeaa
This completes the proof.
\qed

\end{document}